\newtheorem{definition}{Definition}
\newtheorem{theorem}{Theorem}
\newcommand{\hide}[1]{}
\newcommand{\bad}[1]{
	\textcolor{red}{#1}
}
\newcommand{\tensor}[1]{\boldsymbol{\mathscr{#1}}}   
\newcommand{\mat}[1]{\mathbf{#1}}
\newcommand{\vect}[1]{\mathbf{#1}}
\newcommand{\argmin}[1]{\underset{#1}{\operatorname{arg}\,\operatorname{min}}\;}
\newcommand{\CD}{\textsc{CDTF}\xspace}
\newcommand{\CA}{\textsc{SALS}\xspace}
\newcommand{\FF}{\textsc{FlexiFaCT}\xspace}
\newcommand{\MR}{\textsc{MapReduce}\xspace}
\begin{document}

\title{
Distributed Methods for High-dimensional and Large-scale Tensor Factorization}

\author{\IEEEauthorblockN{Kijung Shin}
\IEEEauthorblockA{Dept. of Computer Science and Engineering\\
Seoul National University\\
Seoul, Republic of Korea\\
koreaskj@snu.ac.kr}
\and
\IEEEauthorblockN{U Kang}
\IEEEauthorblockA{Dept. of Computer Science\\
KAIST\\
Daejeon, Republic of Korea\\
ukang@cs.kaist.ac.kr}
}

\maketitle

\begin{abstract}
Given a high-dimensional large-scale tensor, how can we decompose it into latent factors?
Can we process it on commodity computers with limited memory?
These questions are closely related to recommender systems, which have modeled rating data not as a matrix but as a tensor to utilize contextual information such as time and location.
This increase in the dimension requires tensor factorization methods scalable with both the dimension and size of a tensor.
In this paper, we propose two distributed tensor factorization methods, \CA and \CD.
Both methods are scalable with all aspects of data, and they show an interesting trade-off between convergence speed and memory requirements.
\CA updates a subset of the columns of a factor matrix at a time, and \CD, a special case of \CA, updates one column at a time. 
In our experiments, only our methods factorize a 5-dimensional tensor with 1 billion observable entries, 10M mode length, and 1K rank, while all other state-of-the-art methods fail.
Moreover, our methods require several orders of magnitude less memory than our competitors.
We implement our methods on \MR with two widely-applicable optimization techniques: local disk caching and greedy row assignment.
They speed up our methods up to 98.2$\times$ and also the competitors up to 5.9$\times$. 
\end{abstract}

\begin{IEEEkeywords}
Tensor factorization; Recommender system; Distributed computing; MapReduce
\end{IEEEkeywords}

\section{Introduction and Related Work}
\label{sec:intro}
Recommendation problems can be viewed as completing a partially observable user-item matrix whose entries are ratings. Matrix factorization (MF), which decomposes the input matrix into a user factor matrix and an item factor matrix such that their multiplication approximates the input matrix, is one of the most widely-used methods for matrix completion \cite{chen2011linear, koren2009matrix, zhou2008large}.
To handle web-scale data, efforts were made to find distributed ways for MF, including ALS \cite{zhou2008large}, DSGD \cite{gemulla2011large}, and CCD++ \cite{yu2013parallel}.

On the other hand, there have been attempts to improve the accuracy of recommendation by using additional contextual information such as time and location.
A straightforward way to utilize such extra factors is to model rating data as a partially observable tensor where additional dimensions correspond to the extra factors.
Similar to the matrix completion, tensor factorization (TF), which decomposes the input tensor into multiple factor matrices and a core tensor, has been used for tensor completion \cite{Karatzoglou:2010, nanopoulos2010musicbox, zheng2010collaborative}.

\begin{table}[tbp!]
\centering
\caption{Summary of scalability results. The factors which each method is scalable with are checked. Our proposed \CA and \CD are the only methods scalable with all the factors.}
\begin{tabular}{c|ccccc}
\toprule
\textbf{} & \textbf{\CD} & \textbf{\CA} & ALS & PSGD & \FF \\
\midrule
  Dimension & {\large \checkmark} & {\large \checkmark} & \checkmark & \checkmark &  \\
  Observations & {\large \checkmark} & {\large \checkmark} & \checkmark & \checkmark & \checkmark \\
  Mode Length & {\large \checkmark} &  {\large \checkmark} & &  & \checkmark \\
  Rank & {\large \checkmark} & {\large \checkmark} & &  & \checkmark \\
  Machines & {\large \checkmark} & {\large \checkmark} & \checkmark & & \\
\bottomrule
\end{tabular}
\label{tab:scale}
\end{table}

As the dimension of web-scale recommendation problems increases, a necessity for TF algorithms scalable with the dimension as well as the size of data has arisen.
A promising way to find such algorithms is to extend distributed MF algorithms to higher dimensions.
However, the scalability of existing methods including ALS \cite{zhou2008large}, PSGD \cite{mcdonald2010distributed}, and \FF \cite{beutel2014flexifact} is limited as summarized in Table~\ref{tab:scale}.

In this paper, we propose Subset Alternating Least Square (\CA) and Coordinate Descent for Tensor Factorization (\CD), distributed tensor factorization methods scalable with all aspects of data.
\CA updates a subset of the columns of a factor matrix at a time, and \CD, a special case of \CA, updates one column at a time. 
These two methods have distinct advantages: \CA converges faster, and \CD is more memory-efficient.
Our methods can also be used in any applications handling large-scale partially observable tensors, including social network analysis \cite{dunlavy2011temporal} and Web search \cite{sun2005cubesvd}.

\begin{table*}[tbp!]
\vspace{-2mm}
\caption{Summary of distributed tensor factorization algorithms. The performance bottlenecks which prevent each algorithm from handling web-scale data are colored red.
Only our proposed \CA and \CD have no bottleneck.
Communication complexity is measured by the number of parameters that each machine exchanges with the others.
For simplicity, we assume that workload of each algorithm is equally distributed across machines, that the length of every mode is equal to $I$, and that $T_{in}$ of \CA and \CD is set to one.}
\centering
\begin{tabular}{c|cccc}
\toprule
        \textbf{Algorithm} & \textbf{Computational complexity} & \textbf{Communication complexity} & \textbf{Memory requirements} & \textbf{Convergence speed} \\
         & (per iteration) & (per iteration) &  &  \\
\midrule
 \textbf{\CD} & $O(|\Omega|N^{2}K/M)$ & $O(NIK)$ & $O(NI)$ & Fast \\
 \textbf{\CA} & $O(|\Omega|NK(N+C)/M+NIKC^{2}/M)$ & $O(NIK)$ & $O(NIC)$ & Fastest \\
 ALS \cite{zhou2008large} & \bad{$O(|\Omega|NK(N+K)/M+NIK^{3}/M)$} & $O(NIK)$ & \bad{$O(NIK)$} & Fastest \\
 PSGD \cite{mcdonald2010distributed} & $O(|\Omega|NK/M)$ & $O(NIK)$ & \bad{$O(NIK)$} & \bad{Slow} \\
 \FF  \cite{beutel2014flexifact}& $O(|\Omega|NK/M)$ & \bad{$O(M^{N-2}NIK)$} & $O(NIK/M)$ & Fast \\
\bottomrule
\end{tabular}
\label{tab:algorithms}
\end{table*}

The main contributions of our study are as follows:
\begin{itemize*}
  \item \textbf{Algorithm.}
  We propose \CA and \CD, scalable tensor factorization algorithms.
  Their distributed versions are the only methods scalable with all the following factors: the dimension and size of data, the number of parameters, and the number of machines (Table \ref{tab:scale}).
  \item \textbf{Analysis.} We analyze our methods and the competitors in a general N-dimensional setting in the following aspects: computational complexity, communication complexity, memory requirements, and convergence speed (Table \ref{tab:algorithms}).
  \item \textbf{Optimization.}
  We implement our methods on \MR with two widely-applicable optimization techniques: local disk caching and greedy row assignment.
  They speed up not only our methods (up to 98.2$\times$) but also the competitors (up to 5.9$\times$) (Figure~\ref{fig:impl}).
  \item \textbf{Experiment.}
  We empirically confirm the superior scalability of our methods and their several orders of magnitude less memory requirements than the competitors.
  Only our methods analyze a 5-dimensional tensor with 1 billion observable entries, 10M mode length, and 1K rank, while all others fail (Figure~\ref{fig:data_scale_overall}).
\end{itemize*}

\begin{table}[t!]
        \centering
        \caption{Table of symbols.}
        \begin{tabular}{cl}
\toprule
        \textbf{Symbol} & \textbf{Definition} \\
\midrule
  $\tensor{X}$ & input tensor $(\in \mathbb{R}^{I_{1} \times I_{2} ... \times I_{N}})$\\
  $x_{i_{1}...i_{N}}$ & $(i_{1},...,i_{N})$th entry of $\tensor{X}$\\
  $N$ & dimension of $\tensor{X}$\\
  $I_{n}$ & length of the $n$th mode of $\tensor{X}$\\
  $\mat{A}^{(n)}$ & $n$th factor matrix $(\in \mathbb{R}^{I_{n} \times K})$ \\
  $a^{(n)}_{i_{n}k}$ & $(i_{n},k)$th entry of $\mat{A}^{(n)}$\\
  $K$ & rank of $\tensor{X}$\\
  $\Omega$ & set of indices of observable entries of $\tensor{X}$\\
  $\Omega^{(n)}_{i_{n}}$ & subset of $\Omega$ whose $n$th mode's index is equal to $i_{n}$ \\
  ${}_mS_{n}$ & set of rows of $\mat{A}^{(n)}$ assigned to machine $m$\\
  $\tensor{R}$ & residual tensor $(\in \mathbb{R}^{I_{1} \times I_{2} ... \times I_{N}})$\\
  $r_{i_{1}...i_{N}}$ & $(i_{1},...,i_{N})$th entry of $\tensor{R}$\\
  $M$ & number of machines (reducers on \MR)\\
  $T_{out}$ & number of outer iterations\\
  $T_{in}$ & number of inner iterations\\
  $\lambda$ & regularization parameter\\
  $C$ & number of parameters updated at a time\\
  $\eta_{0}$ & initial learning rate\\
\bottomrule
\end{tabular}
\label{tab:Symbols}
\end{table}

The binary codes of our methods and several datasets are available at \url{http://kdmlab.org/sals}.
The rest of this paper is organized as follows.
Section~\ref{sec:prelim} presents preliminaries for tensor factorization and its distributed algorithms.
Section~\ref{sec:method} describes our proposed \CA and \CD methods.
Section~\ref{sec:implementation} presents the optimization techniques for them on \MR.
After providing experimental results in Section~\ref{sec:experiment}, we conclude in Section~\ref{sec:conclusion}.

\section{Preliminaries: Tensor Factorization}
\label{sec:prelim}
In this section, we describe the preliminaries of tensor factorization and its distributed algorithms.

\subsection{Tensor and the Notations}
\label{sec:prelim:tensor}
Tensors are multi-dimensional arrays that generalize vectors ($1$-dimensional tensors) and matrices ($2$-dimensional tensors) to higher dimensions.
Like rows and columns in a matrix, an $N$-dimensional tensor has $N$ modes whose lengths are denoted by $I_{1}$ through $I_{N}$, respectively.
We denote tensors with variable dimension $N$ by boldface Euler script letters, e.g., $\tensor{X}$.
Matrices and vectors are denoted by boldface capitals, e.g., $\mat{A}$, and boldface lowercases, e.g., $\vect{a}$, respectively.
We denote the entry of a tensor by the symbolic name of the tensor with its indices in subscript.
For example, the $(i_{1},i_{2})$th entry of $\mat{A}$ is denoted by $a_{i_{1}i_{2}}$, and the $(i_{1},...,i_{N})$th entry of $\tensor{X}$ is denoted by $x_{i_{1}...i_{N}}$.
The $i_{1}$th row of $\mat{A}$ is denoted by $\vect{a}_{i_{1}*}$, and the $i_{2}$th column of $\mat{A}$ is denoted by $\vect{a}_{*i_{2}}$.
Table \ref{tab:Symbols} lists the symbols used in this paper.

\subsection{Tensor Factorization}
\label{sec:prelim:tc}
There are several ways to define a tensor factorization. Our definition is based on the PARAFAC decomposition \cite{kolda2009tensor}, which is one of the most popular decomposition methods, and the nonzero squared loss with $L_{2}$ regularization, whose weighted form has been successfully used in many recommender systems \cite{chen2011linear, koren2009matrix, zhou2008large}.

\begin{definition}[Tensor Factorization]
\ \\ Given an $N$-dimensional tensor $\tensor{X}(\in \mathbb{R}^{I_{1} \times I_{2} ... \times I_{N}})$ with observable entries $\{x_{i_{1}...i_{N}}|(i_{1},...,i_{N})\in \Omega\}$,
the rank $K$ factorization of $\tensor{X}$ is to find factor matrices $\{\mathbf{A}^{(n)}\in \mathbb{R}^{I_{n} \times K}|1 \leq n \leq N\}$ which minimize the following loss function:
{\setlength\arraycolsep{0.1em}
\small
\begin{multline}
L(\mat{A}^{(1)},...,\mat{A}^{(N)}) = \\
\sum_{(i_{1},...,i_{N})\in \Omega}{\left(x_{i_{1}...i_{N}}-\sum_{k=1}^{K}\prod_{n=1}^{N}a^{(n)}_{i_{n}k}\right)^{2}} + \lambda\sum_{n=1}^{N}{{\| \mat{A}^{(n)} \|}_{F}^{2}}
\label{eq:TF}
\end{multline}
}
Note that the loss function depends only on the observable entries.
Each factor matrix $\mathbf{A}^{(n)}$ corresponds to the latent feature vectors of the objects that the $n$th mode of $\tensor{X}$ represents, and $\sum_{k=1}^{K}\prod_{n=1}^{N}a^{(n)}_{i_{n}k}$ corresponds to the interaction among the features.
\end{definition}

\subsection{Distributed Methods for Tensor Factorization}
\label{sec:prelim:tc_alg}
In this section,  we explain how widely-used distributed optimization methods are applied to tensor factorization.
Their performances are summarized in Table~\ref{tab:algorithms}.
Note that only our proposed CDTF and SALS methods, which will be described in Sections~\ref{sec:method} and~\ref{sec:implementation}, have no bottleneck in any aspects.

\subsubsection{Alternating Least Square (ALS)}
\label{sec:prelim:tensor:als}
Using ALS \cite{zhou2008large}, we update factor matrices one by one while keeping all other matrices fixed.
When all other factor matrices are fixed, minimizing \eqref{eq:TF} is analytically solvable in terms of the updated matrix, which can be updated row by row due to the independence between rows.
The update rule for each row of $\mat{A}^{(n)}$ is as follows:
{\small
	\begin{align}
	[a^{(n)}_{i_{n}1}, ..., a^{(n)}_{i_{n}K}]^{T} & \leftarrow \argmin{[a^{(n)}_{i_{n}1}, ..., a^{(n)}_{i_{n}K}]^{T}}{L(\mat{A}^{(1)},...,\mat{A}^{(N)})} \nonumber\\
	& = (\mat{B}^{(n)}_{i_{n}}+\lambda \mat{I}_{K})^{-1}\vect{c}^{(n)}_{i_{n}}
	\label{eq:ALS}
	\end{align}
}
where the $(k_{1},k_{2})$th entry of $\mat{B}^{(n)}_{i_{n}}(\in\mathbb{R}^{K\times K})$ is \\
{\small
	\begin{equation}
	\sum_{(i_{1},...,i_{N})\in \Omega^{(n)}_{i_{n}}}\left(\prod_{l\neq n}a^{(l)}_{i_{l}k_{1}}\prod_{l\neq n}a^{(l)}_{i_{l}k_{2}}
	\right),\nonumber
	\end{equation}
}
{\small
	the $k$th entry of $\vect{c}^{(n)}_{i_{n}}(\in\mathbb{R}^{K})$ is \\
	\begin{equation}
	\sum_{(i_{1},...,i_{N})\in \Omega^{(n)}_{i_{n}}}\left(x_{i_{1}...i_{N}}\prod_{l\neq n}a^{(l)}_{i_{l}k}
	\right), \nonumber
	\end{equation}
} and $\mat{I}_K$ is the $K$ by $K$ identity matrix.
$\Omega^{(n)}_{i_{n}}$ denotes the subset of $\Omega$ whose $n$th mode's index is $i_{n}$.
This update rule can be proven as in Theorem~\ref{thm:SALS:correctness} in Section~\ref{sec:method:update} since ALS is a special case of \CA.
Updating a row, $\vect{a}^{(n)}_{i_{n}*}$ for example, using \eqref{eq:ALS} takes $O(|\Omega_{i_{n}}|K(N+K)+K^{3})$, which consists of $O(|\Omega^{(n)}_{i_{n}}|NK)$ to calculate $\prod_{l\neq n}a^{(l)}_{i_{l}1}$ through $\prod_{l\neq n}a^{(l)}_{i_{l}K}$ for all the entries in $\Omega^{(n)}_{i_{n}}$, $O(|\Omega^{(n)}_{i_{n}}|K^{2})$ to build $\mat{B}_{i_{n}}^{(n)}$, $O(|\Omega^{(n)}_{i_{n}}|K)$ to build $\vect{c}^{(n)}_{i_{n}}$, and $O(K^{3})$ to invert $\mat{B}_{i_{n}}^{(n)}$.
Thus, updating every row of every factor matrix once, which corresponds to a full ALS iteration, takes $O(|\Omega|NK(N+K)+K^{3}\sum_{n=1}^{N}I_{n})$.

In distributed environments, updating each factor matrix can be parallelized without affecting the correctness of ALS by distributing the rows of the factor matrix across machines and updating them simultaneously.
The parameters updated by each machine are broadcast to all other machines. 
The number of parameters each machine exchanges with the others is $O(KI_{n})$ for each factor matrix $\mat{A}^{(n)}$ and $O(K\sum_{n=1}^{N}I_{n})$ per iteration.
The memory requirements of ALS, however, cannot be distributed.
Since the update rule \eqref{eq:ALS} possibly depends on any entry of any fixed factor matrix, every machine is required to load all the fixed matrices into its memory.
This high memory requirements of ALS, $O(K\sum_{n=1}^{N}I_{n})$ memory space per machine, have been noted as a scalability bottleneck even in matrix factorization \cite{gemulla2011large,yu2013parallel}.

\subsubsection{Parallelized Stochastic Gradient Descent (PSGD)}
PSGD \cite{mcdonald2010distributed} is a distributed algorithm based on stochastic gradient descent (SGD).
In PSGD, the observable entries of $\tensor{X}$ are randomly divided into $M$ machines which run SGD independently using the assigned entries.
The updated parameters are averaged after each iteration.
For each observable entry $x_{i_{1}...i_{N}}$, $a^{(n)}_{i_{n}k}$ for all $n$ and $k$, whose number is $NK$, are updated at once by the following rule :
{\small
	\begin{equation}
	{a}^{(n)}_{i_{n}k} \leftarrow a^{(n)}_{i_{n}k} - 2\eta\left(\frac{\lambda a^{(n)}_{i_{n}k}}{|\Omega^{(n)}_{i_{n}}|} - r_{i_{1}...i_{N}}\left(\prod_{l\neq n}{a^{(l)}_{i_{l}k}}\right)\right)
	\label{eq:SGD}
	\end{equation}
}
where
$r_{i_{1}...i_{N}}=x_{i_{1}...i_{N}}-\sum_{s=1}^{K}\prod_{l=1}^{N}{a^{(l)}_{i_{l}s}}$.
It takes $O(NK)$ to calculate $r_{i_{1}...i_{N}}$ and $\prod_{l=1}^{N}{a^{(l)}_{i_{l}k}}$ for all $k$.
Once they are calculated, since $\prod_{l\neq n}{a^{(l)}_{i_{l}k}}$ can be calculated as $\left(\prod_{l=1}^{N}{a^{(l)}_{i_{l}k}}\right)/a^{(n)}_{i_{n}k}$, calculating \eqref{eq:SGD} takes $O(1)$, and thus updating all the $NK$ parameters takes $O(NK)$. 
If we assume that $\tensor{X}$ entries are equally distributed across machines, 
the computational complexity per iteration is $O(|\Omega|NK/M)$.
Averaging parameters can also be distributed, and in the process, $O(K\sum_{n=1}^{N}I_{n})$ parameters are exchanged by each machine.
Like ALS, the memory requirements of PSGD cannot be distributed, i.e., all the machines are required to load all the factor matrices into their memory. $O(K\sum_{n=1}^{N}I_{n})$ memory space is required per machine.
Moreover, PSGD tends to converge slowly due to the non-identifiability of \eqref{eq:TF} \cite{gemulla2011large}.

\subsubsection{Flexible Factorization of Coupled Tensors (FlexiFaCT)}
\FF \cite{beutel2014flexifact} is another SGD-based algorithm that remedies the high memory requirements and slow convergence of PSGD.
\FF divides $\tensor{X}$ into $M^{N}$ blocks. Each $M$ disjoint blocks that do not share common fibers (rows in a general $n$th mode) compose a stratum. 
\FF processes $\tensor{X}$ one stratum at a time in which the $M$ blocks composing a stratum are distributed across machines and processed independently.
The update rule is the same as \eqref{eq:SGD}, and the computational complexity per iteration is $O(|\Omega|NK/M)$ as in PSGD.
However, contrary to PSGD, averaging is unnecessary because a set of parameters updated by each machine are disjoint with those updated by the other machines.
In addition, the memory requirements of \FF are distributed among the machines.
Each machine only needs to load the parameters related to the block it processes, whose number is $(K\sum_{n=1}^{N}I_{n})/M$, into its memory at a time.
However, \FF suffers from high communication cost.
After processing one stratum, each machine sends the updated parameters to the machine which updates them using the next stratum.
Each machine exchanges at most $(K\sum_{n=2}^{N}I_{n})/M$ parameters per stratum and $M^{N-2}K\sum_{n=2}^{N}I_{n}$ per iteration where $M^{N-1}$ is the number of strata.
Thus, the communication cost increases exponentially with the dimension of $\tensor{X}$ and polynomially with the number of machines.


\section{Proposed methods}
\label{sec:method}
In this section, we propose Subset Alternating Least Square (\CA) and Coordinate Descent for Tensor Factorization (\CD).
They are scalable algorithms for tensor factorization, which is essentially an optimization problem whose loss function is \eqref{eq:TF} and parameters are the entries of factor matrices, $\mat{A}^{(1)}$ through $\mat{A}^{(N)}$.
Figure~\ref{fig:SALS} depicts the difference among \CD, \CA, and ALS.
Unlike ALS, which updates each $K$ columns of factor matrices row by row, \CA updates each $C$ $(1\leq C\leq K)$ columns row by row, and \CD updates each column entry by entry.
\CD can be seen as an extension of CCD++ \cite{yu2013parallel} to higher dimensions.
Since \CA contains \CD($C=1$) as well as ALS ($T_{in}=1,C=K$) as a special case, we focus on \CA then explain additional optimization schemes for \CD.
\begin{figure}[t!]
    \centering
    \subfigure[\CD]
    {
    	\includegraphics[width=.20\linewidth]{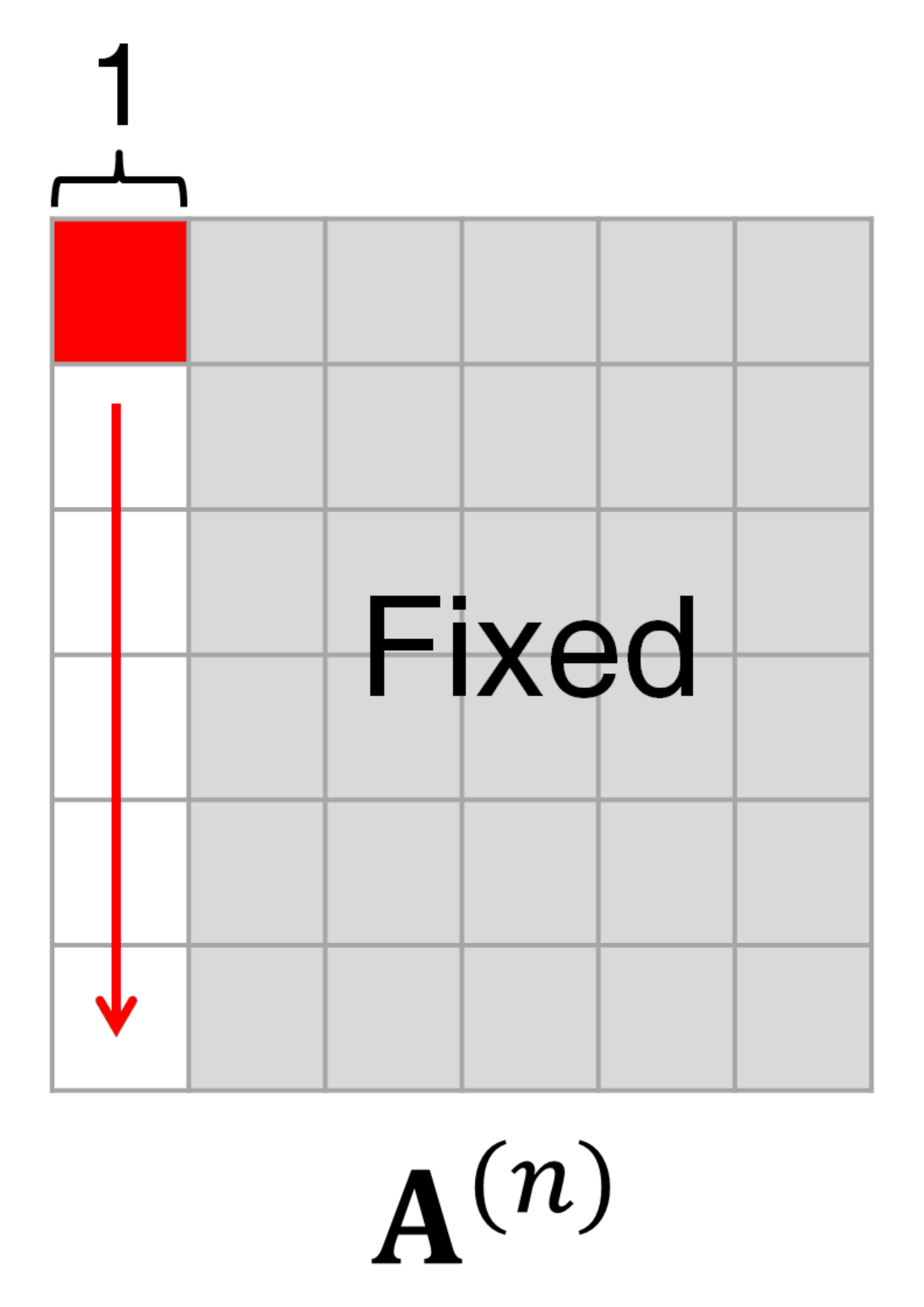}
    }
    \subfigure[\CA]
    {
    	\includegraphics[width=.20\linewidth]{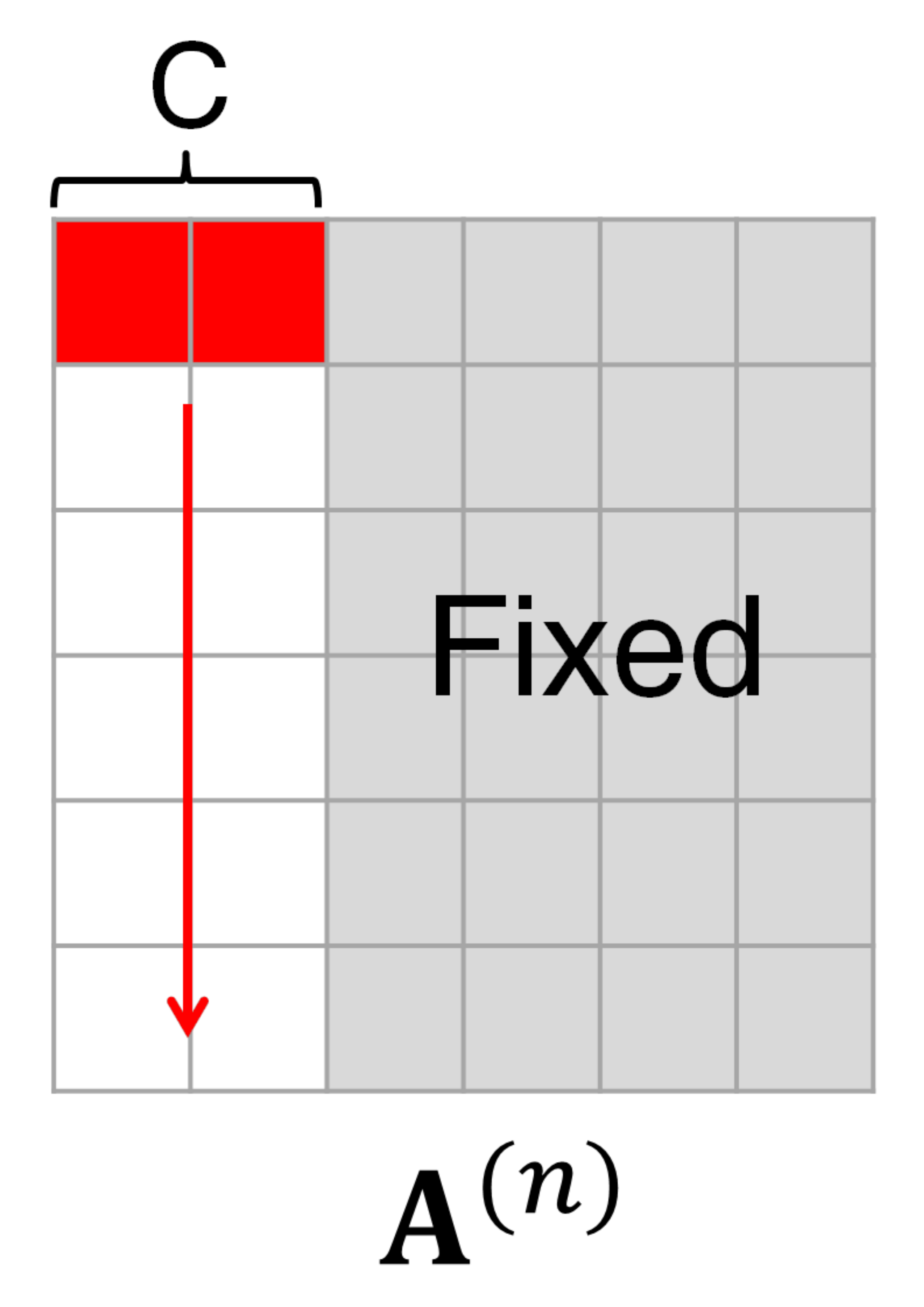}
    }
    \subfigure[ALS]
    {
    	\includegraphics[width=.20\linewidth]{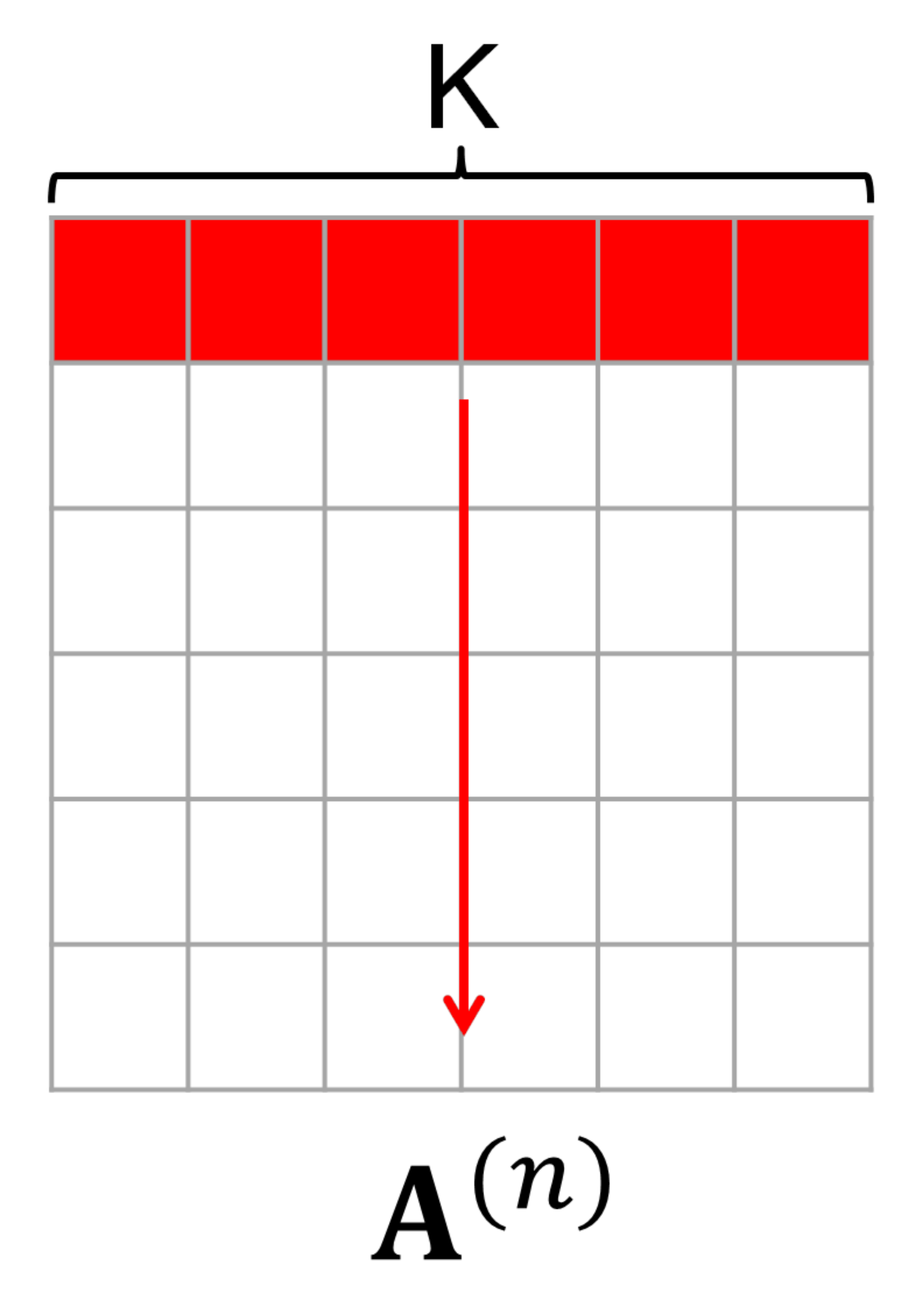}
    }
    \caption{\label{fig:SALS}Update rules of \CD, \CA, and ALS. \CD updates each column of factor matrices entry by entry, \CA updates each $C$ $(1\leq C\leq K)$ columns row by row, and ALS updates each $K$ columns row by row.}
\end{figure}

\subsection{Update Rule and Update Sequence}
\label{sec:method:update}
\begin{algorithm} [tbp!]
\small
\caption{Serial version of \CA} \label{alg:SALS}
  \SetKwInOut{Input}{Input}
  \SetKwInOut{Output}{Output}
  \Input{
  $\tensor{X}$, $K$, $\lambda$
  }
  \Output{
    $\mat{A}^{(n)}$ for all $n$
  }
  \vspace{1.5mm}
  initialize $\tensor{R}$ and $\mat{A}^{(n)}$ for all $n$ \label{alg:SALS:init}\\
  \For{outer iter = 1..$T_{out}$\label{alg:SALS:oter}} {
  	\For{split iter = 1..$\lceil \frac{K}{C} \rceil$}{
  		choose $k_{1}, ..., k_{C}$ {\footnotesize(from columns not updated yet)} \label{alg:SALS:sample}\\
	  	compute $\tensor{\hat{R}}$ \label{alg:SALS:rhat} \\
	  	\For{inner iter = 1..$T_{in}$}{ \label{alg:SALS:Tin}
	  		\For{n = 1..$N$}{ \label{alg:SALS:rankC}
	  			\For{$i_{n}$ = 1..$I_{n}$}{ \label{alg:SALS:column}
	  	   			update $a^{(n)}_{i_{n}k_{1}}, ..., a^{(n)}_{i_{n}k_{C}}$ using \eqref{eq:SALS} \label{alg:SALS:entry}\\
	  	    	}
	  	   	}
	  	 }
	  	 update $\tensor{R}$ using \eqref{eq:SALS_R} \label{alg:SALS:R} \\
	  	}
	 }
\end{algorithm}

Algorithm~\ref{alg:SALS} describes the procedure of \CA.
$\tensor{R}$ denotes the residual tensor where $r_{i_{1}...i_{N}}=x_{i_{1}...i_{N}}-\sum_{k=1}^{K}\prod_{n=1}^{N}{a^{(n)}_{i_{n}k}}$.
We initialize the entries of $\mat{A}^{(1)}$ to zeros and those of all other factor matrices to random values so that the initial value of $\tensor{R}$ is equal to $\tensor{X}$ (line \ref{alg:SALS:init}).
In every iteration (line~\ref{alg:SALS:oter}), \CA repeats choosing $C$ columns, $k_{1}$ through $k_{C}$, randomly without replacement (line~\ref{alg:SALS:sample}) and updating them while keeping the other columns fixed, which is equivalent to the rank $C$ factorization of $\tensor{\hat{R}}$ where $\hat{r}_{i_{1}...i_{N}}=r_{i_{1}...i_{N}}+\sum_{c=1}^{C}\prod_{n=1}^{N}a^{(n)}_{i_{n}k_{c}}$.
Once $\tensor{\hat{R}}$ is computed (line~\ref{alg:SALS:rhat}), updating $C$ columns of factor matrices matrix by matrix (line~\ref{alg:SALS:rankC}) is repeated $T_{in}$ times (line~\ref{alg:SALS:Tin}).
For each factor matrix, since its rows are independent of each other in minimizing \eqref{eq:TF} when the other factor matrices are fixed, the entries are updated row by row (line~\ref{alg:SALS:column}) as follows:
{\small
\begin{align}
 [a^{(n)}_{i_{n}k_{1}}, ..., a^{(n)}_{i_{n}k_{C}}]^{T} \leftarrow & \argmin{[a^{(n)}_{i_{n}k_{1}}, ..., a^{(n)}_{i_{n}k_{C}}]^{T}}{L(\mat{A}^{(1)},...,\mat{A}^{(N)})}\nonumber\\
 & = (\mat{B}^{(n)}_{i_{n}}+\lambda \mat{I}_{C})^{-1}\vect{c}^{(n)}_{i_{n}},
 \label{eq:SALS}
\end{align}
}
where the $(c_{1},c_{2})$th entry of $\mat{B}^{(n)}_{i_{n}}(\in\mathbb{R}^{C\times C})$ is \\
{\footnotesize
\begin{equation}
\sum_{(i_{1},...,i_{N})\in \Omega^{(n)}_{i_{n}}}\left(\prod_{l\neq n}a^{(l)}_{i_{l}k_{c_{1}}}\prod_{l\neq n}a^{(l)}_{i_{l}k_{c_{2}}}
\right),\nonumber
\end{equation}
}
the $c$th entry of $\vect{c}^{(n)}_{i_{n}}(\in\mathbb{R}^{C})$ is \\
{\footnotesize
\begin{equation}
\sum_{(i_{1},...,i_{N})\in \Omega^{(n)}_{i_{n}}}\left(\hat{r}_{i_{1}...i_{N}}\prod_{l\neq n}a^{(l)}_{i_{l}k_{c}}
\right), \nonumber
\end{equation}
}and $\mat{I}_C$ is the $C$ by $C$ identity matrix. $\Omega^{(n)}_{i_{n}}$ denotes the subset of $\Omega$ whose $n$th mode's index is $i_{n}$.
The proof of this update rule is as follows:
\begin{theorem}
	\label{thm:SALS:correctness}
	{\setlength\arraycolsep{0.1em}
		\small
		\begin{equation*}
		\argmin{[a^{(n)}_{i_{n}k_{1}}, ..., a^{(n)}_{i_{n}k_{C}}]^{T}}{L(\mat{A}^{(1)},...,\mat{A}^{(N)})}\nonumber
		= (\mat{B}^{(n)}_{i_{n}}+\lambda \mat{I}_{C})^{-1}\vect{c}^{(n)}_{i_{n}}
		\end{equation*}
	}
\end{theorem}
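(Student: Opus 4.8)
The plan is to show that the loss function $L$, viewed as a function of the single row vector $\mathbf{v} = [a^{(n)}_{i_nk_1}, \dots, a^{(n)}_{i_nk_C}]^T$ with all other parameters held fixed, is a strictly convex quadratic, and then locate its unique minimizer by setting the gradient to zero. First I would isolate the terms of $L$ that actually depend on $\mathbf{v}$. Writing $x_{i_1\dots i_N} - \sum_{k=1}^K \prod_{m=1}^N a^{(m)}_{i_mk} = \hat r_{i_1\dots i_N} - \sum_{c=1}^C \prod_{m=1}^N a^{(m)}_{i_mk_c}$, only the summands of the data-fitting term indexed by $\Omega^{(n)}_{i_n}$ involve $\mathbf{v}$ (since for any other multi-index the $n$th-mode coordinate is some $i_n' \neq i_n$, so $a^{(n)}_{i_n'k_c}$ is not one of our variables), and from the regularizer only $\lambda \sum_{c=1}^C (a^{(n)}_{i_nk_c})^2 = \lambda \|\mathbf{v}\|_2^2$ depends on $\mathbf{v}$. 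Introducing the shorthand $p^{(c)}_{i_1\dots i_N} := \prod_{l \neq n} a^{(l)}_{i_lk_c}$, the data term at a fixed $(i_1,\dots,i_N) \in \Omega^{(n)}_{i_n}$ becomes $\bigl(\hat r_{i_1\dots i_N} - \sum_{c=1}^C v_c\, p^{(c)}_{i_1\dots i_N}\bigr)^2$, which is manifestly a quadratic in $\mathbf{v}$.

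Next I would expand this square and collect coefficients. Summing over $\Omega^{(n)}_{i_n}$ and adding $\lambda\|\mathbf{v}\|^2$, the $\mathbf{v}$-dependent part of $L$ takes the form $\mathbf{v}^T(\mathbf{B}^{(n)}_{i_n} + \lambda\mathbf{I}_C)\mathbf{v} - 2\mathbf{v}^T\mathbf{c}^{(n)}_{i_n} + \text{const}$, where one reads off directly that the $(c_1,c_2)$ entry of $\mathbf{B}^{(n)}_{i_n}$ is $\sum_{\Omega^{(n)}_{i_n}} p^{(c_1)}_{i_1\dots i_N} p^{(c_2)}_{i_1\dots i_N}$ and the $c$th entry of $\mathbf{c}^{(n)}_{i_n}$ is $\sum_{\Omega^{(n)}_{i_n}} \hat r_{i_1\dots i_N}\, p^{(c)}_{i_1\dots i_N}$, matching the statements in the paper. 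Then I would take the gradient with respect to $\mathbf{v}$ and set it to zero, giving the normal equations $(\mathbf{B}^{(n)}_{i_n} + \lambda\mathbf{I}_C)\mathbf{v} = \mathbf{c}^{(n)}_{i_n}$. Since $\mathbf{B}^{(n)}_{i_n}$ is a sum of rank-one outer products $\mathbf{p}\mathbf{p}^T$ it is positive semidefinite, so $\mathbf{B}^{(n)}_{i_n} + \lambda\mathbf{I}_C$ is positive definite (for $\lambda > 0$) and hence invertible; this simultaneously guarantees strict convexity, so the stationary point is the unique global minimizer, yielding $\mathbf{v} = (\mathbf{B}^{(n)}_{i_n} + \lambda\mathbf{I}_C)^{-1}\mathbf{c}^{(n)}_{i_n}$.

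I expect the only real subtlety — not hard, but the step demanding care — to be the clean separation of the $\mathbf{v}$-dependent terms: arguing precisely why observable entries outside $\Omega^{(n)}_{i_n}$ contribute nothing, why columns outside $\{k_1,\dots,k_C\}$ drop out into the constant (this is exactly where the reformulation via $\hat r$ earns its keep, absorbing the fixed columns' contribution into $\hat r$), and why the cross terms between the fixed columns and the variable ones vanish after the $\hat r$ substitution. Everything after that is the standard least-squares computation. A minor point worth a remark is the positivity of $\lambda$, which the definition implicitly assumes; with $\lambda > 0$ the invertibility is unconditional, so the $\argmin$ is well-defined and the formula is exact.
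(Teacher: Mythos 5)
Your proposal is correct and follows essentially the same route as the paper's proof: restrict $L$ to the terms depending on the row $[a^{(n)}_{i_nk_1},\dots,a^{(n)}_{i_nk_C}]^{T}$, set the gradient to zero, and read off the normal equations $(\mat{B}^{(n)}_{i_n}+\lambda \mat{I}_{C})\vect{v}=\vect{c}^{(n)}_{i_n}$. The one thing you add beyond the paper --- the observation that $\mat{B}^{(n)}_{i_n}$ is a sum of outer products and hence $\mat{B}^{(n)}_{i_n}+\lambda\mat{I}_C$ is positive definite for $\lambda>0$, so the stationary point is the unique global minimizer and the inverse exists --- is a welcome tightening that the paper leaves implicit.
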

\begin{proof}
{\setlength\arraycolsep{0.1em}
	\small
	\begin{align*}
	\ & \frac{\partial L}{\partial a^{(n)}_{i_{n}k_{c}}}=0, \forall c, 1 \leq c \leq C \\
	\ & \Leftrightarrow \sum_{(i_{1},...,i_{N})\in \Omega^{n}_{i_{n}}}{\left(\left(\sum_{s=1}^{C}\prod_{l=1} ^{N}a^{(l)}_{i_{l}k_{s}}-\hat{r}_{i_{1}...i_{N}}\right)\prod_{l\neq n}a^{(l)}_{i_{l}k_{c}}\right)}
	+ \lambda a^{(n)}_{i_{n}k_{c}}\\
	& \;\;\;=0, \forall c \\
	\ & \Leftrightarrow \sum_{(i_{1},...,i_{N})\in \Omega^{n}_{i_{n}}}{\left(\sum_{s=1}^{C}\left(a^{(n)}_{i_{n}k_{s}}\prod_{l\neq n}a^{(l)}_{i_{l}k_{s}}\right)\prod_{l\neq n}a^{(l)}_{i_{l}k_{c}}\right)}
	+ \lambda a^{(n)}_{i_{n}k_{c}}\\
	\ & \;\;\; =\sum_{(i_{1},...,i_{N})\in \Omega^{n}_{i_{n}}}\left(\hat{r}_{i_{1}...i_{N}}\prod_{l\neq n}a^{(l)}_{i_{l}k_{c}}\right), \forall c \\
	\ & \Leftrightarrow (\mat{B}^{(n)}_{i_{n}}+\lambda \mat{I}_{c})[a^{(n)}_{i_{n}k_{1}}, ..., a^{(n)}_{i_{n}k_{C}}]^{T}=\vect{c}^{(n)}_{i_{n}}
	\end{align*}
}
\end{proof}
Since $\mat{B}^{(n)}_{i_{n}}$ is symmetric, instead of computing its inverse, the Cholesky decomposition can be used.
After this rank $C$ factorization, the entries of $\tensor{R}$ are updated by the following rule (line~\ref{alg:SALS:R}):
\begin{equation}\small
r_{i_{1}...i_{N}} \leftarrow \hat{r}_{i_{1}...i_{N}}-\sum_{c=1}^{C}\prod_{n=1}^{N}a^{(n)}_{i_{n}k_{c}}.
\label{eq:SALS_R}
\end{equation}

\CD is a special case of \CA where $C$ is set to one.
In \CD, instead of computing $\tensor{\hat{R}}$ before rank one factorization, the entries of $\tensor{\hat{R}}$ can be computed while computing \eqref{eq:SALS} and \eqref{eq:SALS_R}.
This can result in better performance on a disk-based system like \MR by reducing disk I/O operations.
Moreover, instead of randomly changing the order by which columns are updated at each iteration, fixing the order speeds up the convergence of \CD in our experiments.

\subsection{Complexity Analysis}
\begin{theorem}
The computational complexity of Algorithm \ref{alg:SALS} is $O(T_{out}|\Omega|NT_{in}K(N+C)+T_{out}T_{in}KC^{2}\sum_{n=1}^{N}I_{n})$.
\end{theorem}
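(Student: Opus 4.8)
The plan is to walk down the nested loop structure of Algorithm~\ref{alg:SALS} from the innermost line outward, bound the cost of each line, and multiply by the number of times it is executed. The only line whose per-execution cost requires real work is line~\ref{alg:SALS:entry}, the rank-$C$ row update via \eqref{eq:SALS}; every other line is bookkeeping that either sweeps once over $\Omega$ or once over the factor matrices.

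First I would analyze a single execution of line~\ref{alg:SALS:entry}, i.e.\ updating $a^{(n)}_{i_nk_1},\dots,a^{(n)}_{i_nk_C}$ for a fixed $n$ and $i_n$. This mirrors the ALS cost analysis already given in Section~\ref{sec:prelim:tensor:als}, with $K$ replaced by $C$: computing $\prod_{l\neq n}a^{(l)}_{i_lk_c}$ for all $c$ and all entries of $\Omega^{(n)}_{i_n}$ costs $O(|\Omega^{(n)}_{i_n}|NC)$ (forming $\prod_{l}a^{(l)}_{i_lk_c}$ once and dividing by $a^{(n)}_{i_nk_c}$ stays within this bound); building the symmetric matrix $\mat{B}^{(n)}_{i_n}\in\mathbb{R}^{C\times C}$ costs $O(|\Omega^{(n)}_{i_n}|C^2)$; building $\vect{c}^{(n)}_{i_n}\in\mathbb{R}^{C}$ costs $O(|\Omega^{(n)}_{i_n}|C)$; and solving the $C\times C$ system, by Cholesky as remarked after Theorem~\ref{thm:SALS:correctness}, costs $O(C^3)$. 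Hence one row update is $O(|\Omega^{(n)}_{i_n}|C(N+C)+C^3)$.

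Next I would sum over the two loops enclosing line~\ref{alg:SALS:entry}. Summing over $i_n=1,\dots,I_n$ (line~\ref{alg:SALS:column}) and using $\sum_{i_n}|\Omega^{(n)}_{i_n}|=|\Omega|$ gives $O(|\Omega|C(N+C)+C^3I_n)$ for one factor matrix; summing over $n=1,\dots,N$ (line~\ref{alg:SALS:rankC}) gives $O(|\Omega|NC(N+C)+C^3\sum_{n=1}^{N}I_n)$ per inner iteration, and multiplying by $T_{in}$ (line~\ref{alg:SALS:Tin}) accounts for the whole inner block. Then I add the two remaining costs inside the split loop: computing $\tensor{\hat{R}}$ in line~\ref{alg:SALS:rhat} and updating $\tensor{R}$ in line~\ref{alg:SALS:R}, each of which touches every entry of $\Omega$ with $O(NC)$ work per entry, hence $O(|\Omega|NC)$ each, which is dominated by the inner block since $T_{in}\ge 1$. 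So one split iteration costs $O(T_{in}|\Omega|NC(N+C)+T_{in}C^3\sum_{n=1}^{N}I_n)$.

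Finally I would multiply by the $\lceil K/C\rceil$ split iterations, using $\lceil K/C\rceil C\le K+C\le 2K$ since $1\le C\le K$, so the factor $C$ becomes $K$ and $C^3$ becomes $KC^2$, and then by the $T_{out}$ outer iterations, obtaining $O(T_{out}T_{in}|\Omega|NK(N+C)+T_{out}T_{in}KC^2\sum_{n=1}^{N}I_n)$, exactly the claimed bound; the one-time initialization in line~\ref{alg:SALS:init}, costing $O(|\Omega|+K\sum_{n=1}^{N}I_n)$, is absorbed. The only place where care is needed is the rank-$C$ row update: one must make sure the products $\prod_{l\neq n}a^{(l)}_{i_lk_c}$ are computed once and shared between $\mat{B}^{(n)}_{i_n}$ and $\vect{c}^{(n)}_{i_n}$, so that the $O(|\Omega^{(n)}_{i_n}|NC)$ term is not inflated to $O(|\Omega^{(n)}_{i_n}|NC^2)$, and the ceiling $\lceil K/C\rceil$ must be handled cleanly; the rest is routine multiplication of loop bounds.
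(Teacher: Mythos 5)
Your proposal is correct and follows essentially the same route as the paper's proof: bound the per-row update by $O(|\Omega^{(n)}_{i_n}|C(N+C)+C^3)$ via the same four-part decomposition, sum over rows, modes, inner iterations, the $K/C$ split iterations, and the outer iterations, and observe that the $\tensor{\hat{R}}$ and $\tensor{R}$ passes contribute a dominated $O(|\Omega|NK)$ term. Your extra care about the ceiling $\lceil K/C\rceil$ and about sharing the products $\prod_{l\neq n}a^{(l)}_{i_lk_c}$ between $\mat{B}^{(n)}_{i_n}$ and $\vect{c}^{(n)}_{i_n}$ is a welcome tightening but does not change the argument.
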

\begin{proof}

Computing $\tensor{\hat{R}}$ (line~\ref{alg:SALS:rhat}) and updating $\tensor{R}$ (line~\ref{alg:SALS:R}) take $O(|\Omega|NC)$.
Updating $C$ parameters (line~\ref{alg:SALS:entry}) takes $O(|\Omega^{(n)}_{i_{n}}|C(C+N)+C^{3})$, which consists of $O(|\Omega^{(n)}_{i_{n}}|NC)$ to calculate $\prod_{l\neq n}a^{(l)}_{i_{l}k_{1}}$ through $\prod_{l\neq n}a^{(l)}_{i_{l}k_{C}}$ for all the entries in $|\Omega|^{(n)}_{i_{n}}$, $O(|\Omega^{(n)}_{i_{n}}|C^{2})$ to build $\mat{B}_{i_{n}}^{(n)}$, $O(|\Omega^{(n)}_{i_{n}}|C)$ to build $\vect{c}^{(n)}_{i_{n}}$, and $O(C^{3})$ to invert $\mat{B}_{i_{n}}^{(n)}$.
Thus, updating all the entries in $C$ columns (lines~\ref{alg:SALS:column} through \ref{alg:SALS:entry}) takes $O(|\Omega|C(C+N)+I_{n}C^{3})$, and the rank $C$ factorization (lines~\ref{alg:SALS:rankC} through \ref{alg:SALS:entry}) takes $O(|\Omega|NC(N+C)+C^{3}\sum_{n=1}^{N}I_{n})$.
As a result, an outer iteration, which repeats the rank $C$ factorization $T_{in}K/C$ times and both $\tensor{\hat{R}}$ computation and $\tensor{R}$ update $K/C$ times, takes $O(|\Omega|NT_{in}K(N+C)+T_{in}KC^{2}\sum_{n=1}^{N}I_{n})+O(|\Omega|NK)$, where the second term is dominated.
\end{proof}

\begin{theorem}\label{theorem:memory}
The memory requirement of Algorithm~\ref{alg:SALS} is $O(C\sum_{n=1}^{N}I_{n})$.
\end{theorem}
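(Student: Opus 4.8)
The plan is to account for every array that the serial \CA procedure (Algorithm~\ref{alg:SALS}) holds in main memory and to show that the dominant contribution is the block of factor-matrix columns currently being updated, which occupies $C\sum_{n=1}^{N}I_{n}$ words, while everything else is either $O(C^{2})$ or $O(1)$ and is absorbed. The one convention I would make explicit at the outset is the same one used implicitly throughout Table~\ref{tab:algorithms} and realized by the \MR implementation of Section~\ref{sec:implementation}: the tensor-sized objects --- the input $\tensor{X}$, the residual $\tensor{R}$, and the partially updated residual $\tensor{\hat{R}}$ --- reside on disk and are read (and, for $\tensor{R}$, written back) in a streaming fashion, so only a bounded number of their entries are resident at any instant.

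First I would isolate the main term. At each split iteration \CA fixes $C$ column indices $k_{1},...,k_{C}$ (line~\ref{alg:SALS:sample}) and, until the residual is rewritten by \eqref{eq:SALS_R}, touches the factor matrices only through those $C$ columns: the update rule \eqref{eq:SALS} for a row of $\mat{A}^{(n)}$ involves the products $\prod_{l\neq n}a^{(l)}_{i_{l}k_{c}}$ and the precomputed $\hat{r}_{i_{1}...i_{N}}$, and the columns outside $\{k_{1},...,k_{C}\}$ enter only through $\tensor{\hat{R}}$, which has already been folded onto disk in line~\ref{alg:SALS:rhat}. Hence it suffices to keep, for each $n$, the submatrix of $\mat{A}^{(n)}$ formed by columns $k_{1},...,k_{C}$, which has $CI_{n}$ entries, for a total of $C\sum_{n=1}^{N}I_{n}$. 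These columns are also genuinely needed in full: for a given $n$ the sums defining $\mat{B}^{(n)}_{i_{n}}$ and $\vect{c}^{(n)}_{i_{n}}$ range over $\Omega^{(n)}_{i_{n}}$, whose members may carry any index in every other mode, and line~\ref{alg:SALS:rankC} cycles $n$ over all $N$ modes.

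Next I would bound the remaining working storage. While updating a single row $i_{n}$ (line~\ref{alg:SALS:entry}) the algorithm forms $\mat{B}^{(n)}_{i_{n}}\in\mathbb{R}^{C\times C}$, its Cholesky factor, and $\vect{c}^{(n)}_{i_{n}}\in\mathbb{R}^{C}$, and only one such linear system is live at a time because rows are processed one after another (line~\ref{alg:SALS:column}); this costs $O(C^{2})$. The streaming buffers for entries of $\tensor{X}$, $\tensor{R}$, $\tensor{\hat{R}}$ (and, for \CD, the fused evaluation of $\tensor{\hat{R}}$ alongside \eqref{eq:SALS} and \eqref{eq:SALS_R}) hold $O(1)$ entries at a time. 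Since $C\le K\le\sum_{n=1}^{N}I_{n}$ --- the rank never exceeds the total mode length in a non-degenerate factorization --- we have $C^{2}\le C\sum_{n=1}^{N}I_{n}$, so every auxiliary term is dominated by the column block, and the total memory requirement is $O(C\sum_{n=1}^{N}I_{n})$, matching the $O(NIC)$ entry for \CA in Table~\ref{tab:algorithms} when all modes have length $I$.

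The place where a skeptical reader would push back --- the "hard part", such as it is --- is not a calculation but this accounting convention: one must justify that $\tensor{R}$ stays off the critical in-memory footprint. This is exactly why the algorithm maintains $\tensor{R}$ explicitly rather than recomputing residuals on the fly: recomputing $r_{i_{1}...i_{N}}=x_{i_{1}...i_{N}}-\sum_{k=1}^{K}\prod_{n=1}^{N}a^{(n)}_{i_{n}k}$ would force all $K$ columns of every factor matrix into memory, i.e. $O(K\sum_{n=1}^{N}I_{n})$, whereas keeping $\tensor{R}$ on disk and materializing only $C$ columns at a time is what yields the claimed bound. I would make this trade-off explicit, after which the result follows from the bookkeeping above.
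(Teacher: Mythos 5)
Your proof is correct and follows essentially the same route as the paper's: keep only the $C$ active columns of each factor matrix in memory (totalling $C\sum_{n=1}^{N}I_{n}$ entries) and stream $\tensor{R}$ and $\tensor{\hat{R}}$ from disk, which is legitimate because $\mat{B}^{(n)}_{i_{n}}$ and $\vect{c}^{(n)}_{i_{n}}$ are sums of per-entry contributions. The extra bookkeeping you add --- the $O(C^{2})$ buffer for the normal equations and the remark on why $\tensor{R}$ is maintained rather than recomputed --- is sound but not part of the paper's (terser) argument.
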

\begin{proof}

Since $\tensor{\hat{R}}$ computation (line~\ref{alg:SALS:rhat}), rank $C$ factorization (lines~\ref{alg:SALS:rankC} through \ref{alg:SALS:entry}), and $\tensor{R}$ update (line~\ref{alg:SALS:R}) all depend only on the $C$ columns of the factor matrices, the number of whose entries is $C\sum_{n=1}^{N}I_{n}$, the other $(K-C)$ columns do not need to be loaded into memory.
Thus, the columns of the factor matrices can be loaded by turns depending on $(k_{1},...,k_{C})$ values.
Moreover, updating $C$ columns  (lines~\ref{alg:SALS:column} through \ref{alg:SALS:entry}) can be processed by streaming the entries of $\tensor{\hat{R}}$ from disk and processing them one by one instead of loading them all at once because the entries of $\mat{B}^{(n)}_{i_{n}}$ and $\vect{c}^{(n)}_{i_{n}}$ in \eqref{eq:SALS} are the sum of the values calculated independently from each $\tensor{\hat{R}}$ entry.
Likewise, $\tensor{\hat{R}}$ computation and $\tensor{R}$ update can also be processed by streaming $\tensor{R}$ and $\tensor{\hat{R}}$, respectively.
\end{proof}

\subsection{Parallelization in Distributed Environments}
\label{sec:method:parallel}
In this section, we describe how to parallelize \CA in distributed environments such as \MR where machines do not share memory.
Algorithm \ref{alg:SALS_DIST} depicts the distributed version of \CA.

\begin{algorithm} [tbp!]
	\small
	\caption{Distributed version of \CA} \label{alg:SALS_DIST}
	\SetKwInOut{Input}{Input}
	\SetKwInOut{Output}{Output}
	\Input{
		$\tensor{X}$, $K$, $\lambda$, ${}_mS_n$ for all $m$ and $n$
	}
	\Output{
		$\mat{A}^{(n)}$ for all $n$
	}
	\vspace{1.5mm}
	distribute the ${}_m\Omega$ entries of $\tensor{X}$ to each machine $m$ \label{alg:SALS_DIST:distribute}\\
	\textbf{Parallel (P):} initialize the ${}_m\Omega$ entries of $\tensor{R}$ \\
	\textbf{P:} initialize $\mat{A}^{(n)}$ for all $n$\\
	\For{outer iter = 1..$T_{out}$}{
		\For{split iter = 1..$\lceil\frac{K}{C}\rceil$}{
			choose $k_{1}, ..., k_{C}$ {\footnotesize(from columns not updated yet)}\\
			\textbf{P:} compute ${}_m\Omega$ entries of $\tensor{\hat{R}}$\label{alg:SALS_DIST:Rhat}\\
			\For{inner iter = 1..$T_{in}$}{
				\For{n = 1..$N$}{ \label{alg:SALS_DIST:rankC}
					\textbf{P:} update {\footnotesize $\{a^{(n)}_{i_{n}k_{c}}|i_{n}\in{}_mS_{n},1\leq c \leq C\}$} using \eqref{eq:SALS} \label{alg:SALS_DIST:entry}\\
					\textbf{P:} broadcast
					{\footnotesize $\{a^{(n)}_{i_{n}k_{c}}|i_{n}\in{}_mS_{n},1\leq c \leq C\}$}
					\label{alg:SALS_DIST:broadcast}\\
				}
			}
			\textbf{P:} update the ${}_m\Omega$ entries of $\tensor{R}$ using \eqref{eq:SALS_R}\label{alg:SALS_DIST:R}\\
		}
	}
\end{algorithm}

Since update rule \eqref{eq:SALS} for each row (C parameters) of a factor matrix does not depend on the other rows in the matrix, rows in a factor matrix can be distributed across machines and updated simultaneously without affecting the correctness of \CA.
Each machine $m$ updates ${}_mS_{n}$ rows of $\mat{A}^{(n)}$ (line~\ref{alg:SALS_DIST:entry}), and for this, the $_m\Omega=\bigcup_{n=1}^{N}\left(\bigcup_{i_{n}\in {}_mS_n}\Omega^{(n)}_{i_{n}}\right)$ entries of $\tensor{X}$ are distributed to machine $m$ in the first stage of the algorithm (line~\ref{alg:SALS_DIST:distribute}).
Figure \ref{fig:CDTC} shows an example of work and data distribution in \CA.

\begin{figure}[tbp!]
    \centering
    \subfigure[Machine 1]
    {
    	\includegraphics[width=.26\linewidth]{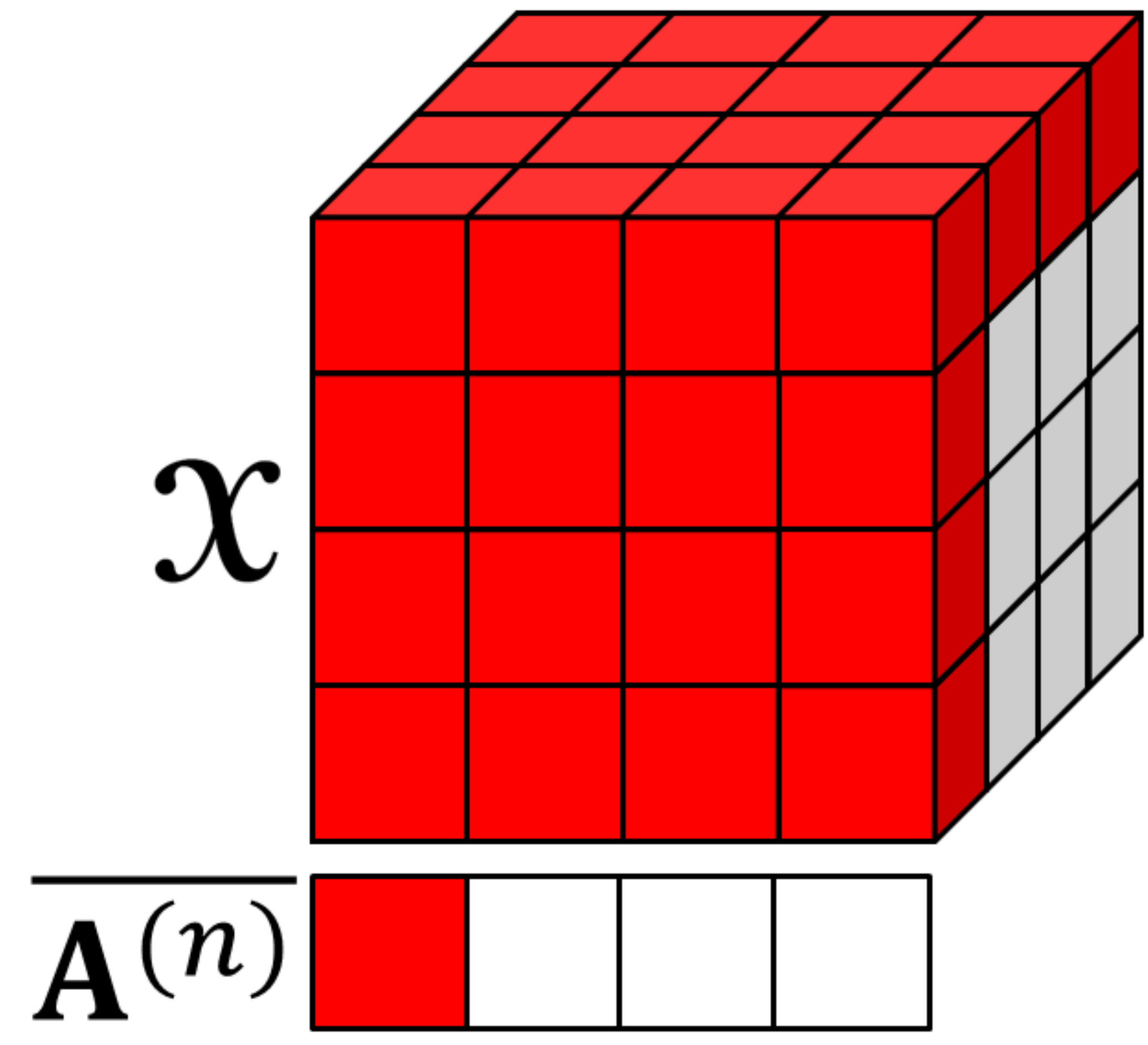}
    }
    \subfigure[Machine 2]
    {
    	\includegraphics[width=.192\linewidth]{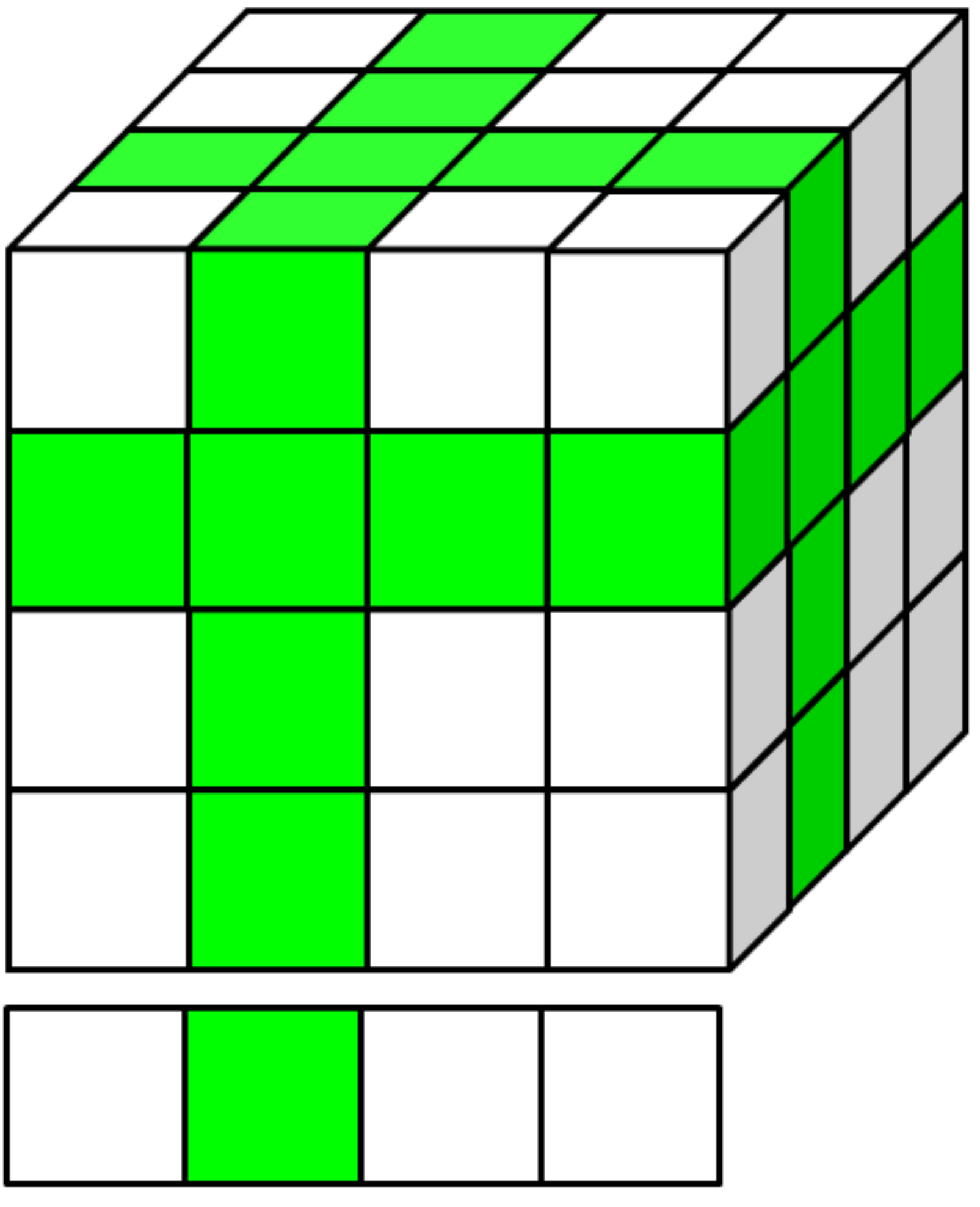}
    }
    \subfigure[Machine 3]
    {
    	\includegraphics[width=.192\linewidth]{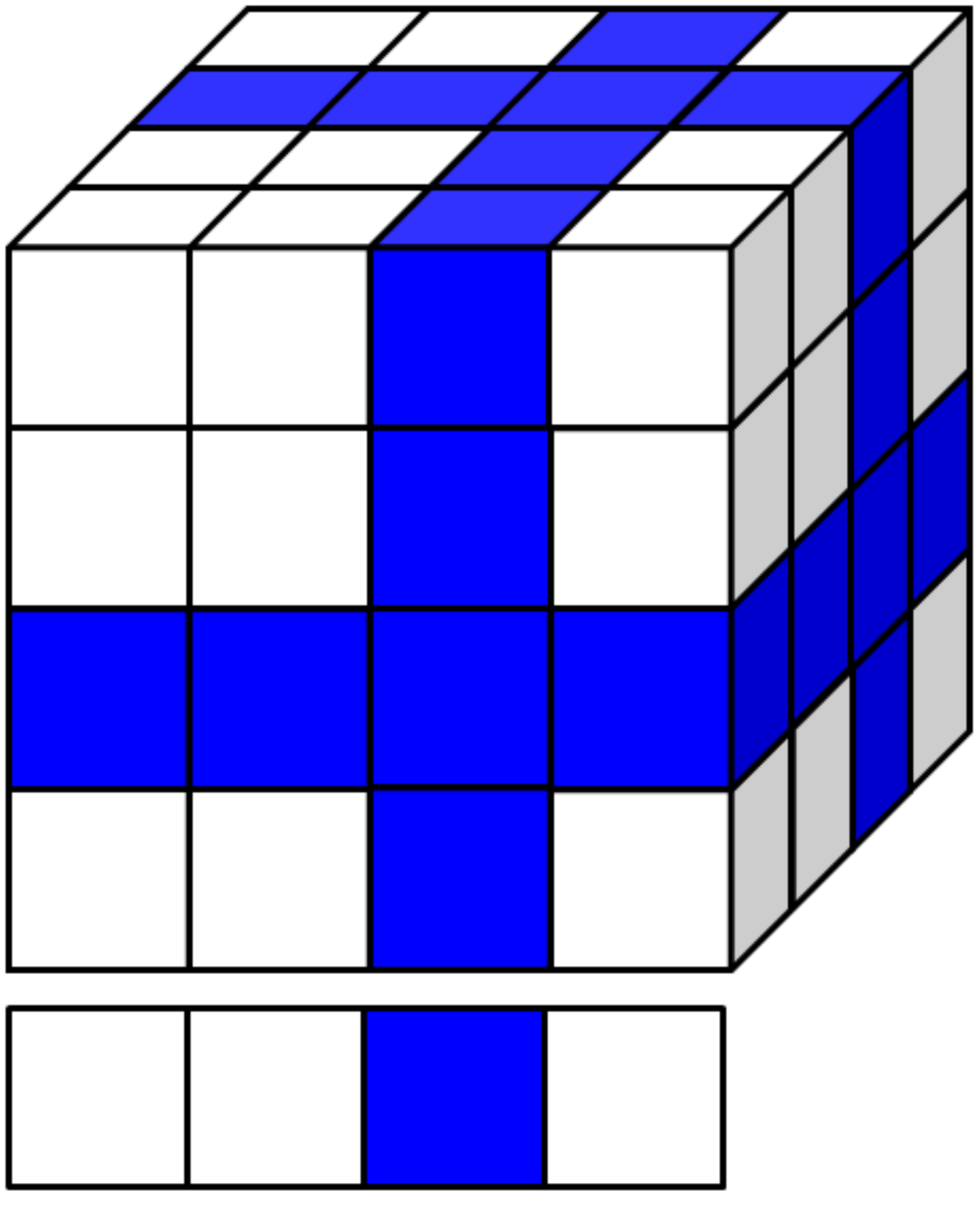}
    }
    \subfigure[Machine 4]
    {
    	\includegraphics[width=.192\linewidth]{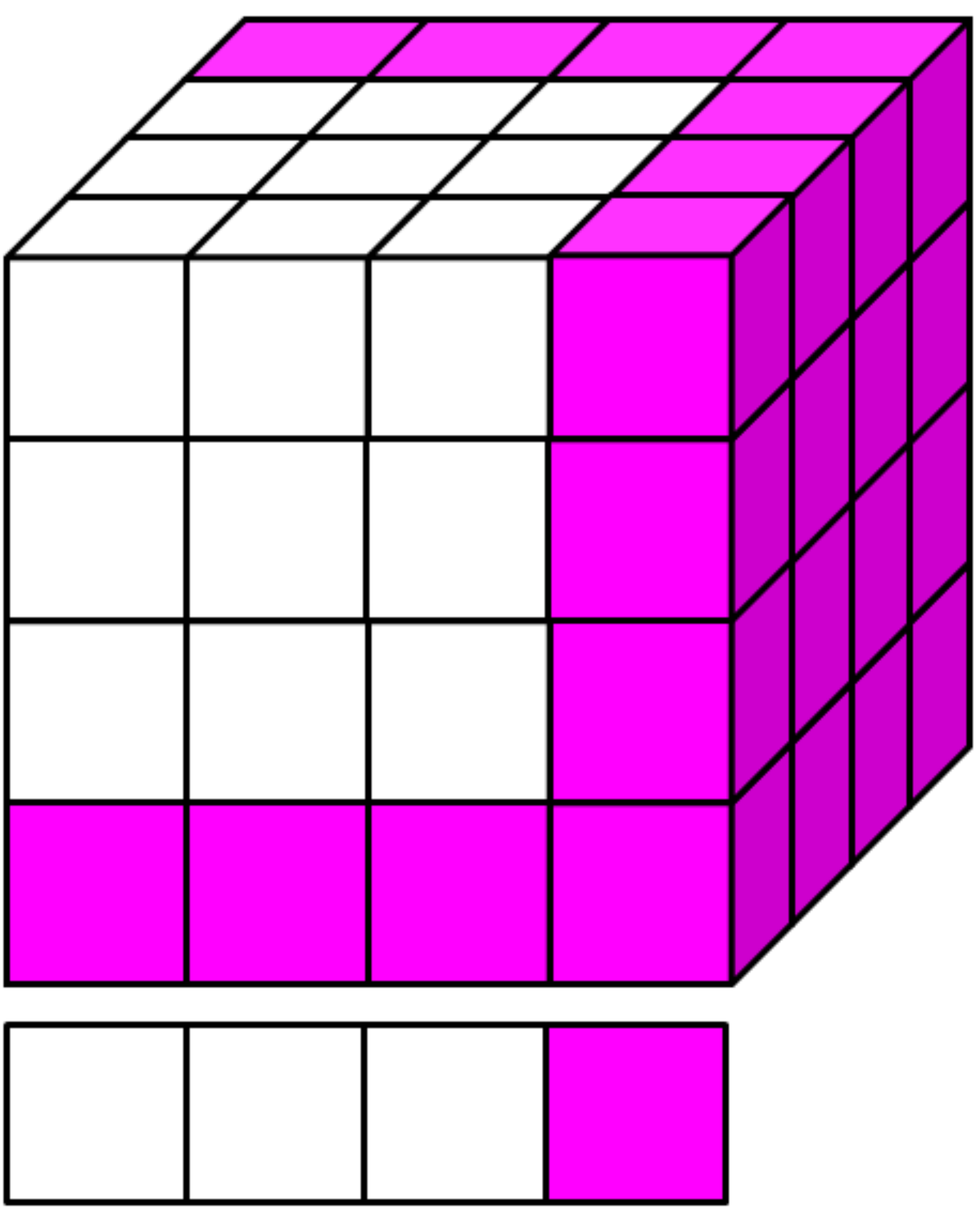}
    }
    \caption{\label{fig:CDTC} Work and data distribution of \CA in distributed environments
    when an input tensor is a three-dimensional tensor and the number of machines is four. We assume that the rows of the factor matrices are assigned to the machines sequentially.
    The colored region of $\overline{\mat{A}^{(n)}}$ (the transpose of $\mat{A}^{(n)}$) in each sub-figure corresponds to the parameters updated by each machine, resp., and that of $\tensor{X}$ corresponds to the data distributed to each machine.
    }
\end{figure}
After the update, parameters updated by each machine are broadcast to all other machines (line~\ref{alg:SALS_DIST:broadcast}).
Each machine $m$ broadcasts $C|{}_mS_{n}|$ parameters and receives $C(I_{n}-|{}_mS_{n}|)$ parameters from the other machines after each update.
The total number of parameters each machine exchanges with the other machines is $KT_{in}\sum_{n=1}^{N}I_{n}$ per outer iteration.

The running time of parallel steps in Algorithm~\ref{alg:SALS_DIST} depends on the longest running time among all machines.
Specifically, the running time of lines~\ref{alg:SALS_DIST:Rhat}, \ref{alg:SALS_DIST:entry}, and \ref{alg:SALS_DIST:R} is proportional to $\max_{m}|{}_m\Omega^{(n)}|$ where ${}_m\Omega^{(n)}=\bigcup_{i_{n}\in {}_mS_n}\Omega^{(n)}_{i_{n}}$, and that of line~\ref{alg:SALS_DIST:broadcast} is proportional to $\max_{m}|{}_mS_{n}|$.
Therefore, it is necessary to assign the rows of the factor matrices to the machines (i.e., to decide ${}_mS_{n}$) so that $|{}_m\Omega^{(n)}|$ and $|{}_mS_{n}|$ are even among all the machines.
The greedy assignment algorithm described in Algorithm~\ref{alg:index} aims to minimize $\max_{m}|{}_m\Omega^{(n)}|$ under the condition that $\max_{m}|{}_mS_{n}|$ is minimized (i.e., $|{}_mS_{n}|=I_{n}/M$ for all $n$ where $M$ is the number of machines).
For each factor matrix $\mat{A}^{(n)}$, we sort its rows in the decreasing order of $|\Omega^{(n)}_{i_n}|$ and assign the rows one by one to the machine $m$ which satisfies $|{}_mS_{n}|<\lceil I_{n}/M\rceil$ and has the smallest $|{}_m\Omega^{(n)}|$ currently.
The effects of this greedy row assignment on actual running times are analyzed in Section~\ref{sec:exp:optimization}.

\begin{algorithm} [tbp!]
\small
\caption{Greedy row assignment in \small{\CA}} \label{alg:index}
  \SetKwInOut{Input}{Input}
  \SetKwInOut{Output}{Output}
  \Input{
  	$\tensor{X}$, $M$
  }
  \Output{
    ${}_mS_n$ for all $m$ and $n$\\
  }
  \vspace{1.5mm}
  initialize $|{}_m\Omega|$ to $0$ for all $m$\\
  \For{n = 1..$N$}{
    initialize ${}_mS_{n}$ to $\emptyset$ for all $m$\\
    initialize $|{}_m\Omega^{(n)}|$ to $0$ for all $m$\\
    calculate $|\Omega^{(n)}_{i_n}|$ for all $i_n$\\
	
	\ForEach { $i_n$ {\normalfont (in decreasing order of $|\Omega^{(n)}_{i_n}|$)}}{
	   	find $m$ with $|{}_mS_{n}|<\lceil\frac{I_{n}}{M}\rceil$ and the smallest $|{}_m\Omega^{(n)}|$ \\
	   	{\footnotesize(in case of a tie, choose the machine with smaller $|{}_mS_{n}|$, and
	   	if still a tie, choose the one with smaller $|{}_m\Omega|$)}\\
	   	add $i_{n}$ to ${}_mS_{n}$ \\
	   	add $|\Omega^{(n)}_{i_n}|$ to $|{}_m\Omega^{(n)}|$ and $|{}_m\Omega|$ \\
	}
 }
\end{algorithm}

\section{Optimization on MapReduce}
\label{sec:implementation}
In this section, we describe two optimization techniques used to implement \CA and \CD on \MR, which is one of the most widely-used distributed platforms.

\begin{algorithm} [bpt!]
	\small
	\caption{Parameter update in \CA without local disk caching} \label{alg:NAIVE_CA_A}
	\SetKwInOut{Given}{Given}
	\SetKwInOut{Input}{Input}
	\SetKwInOut{Output}{Output}
	\SetKwFunction{PassMa}{Map(Key k, Value v)}
	\SetKwFunction{PassPa}{Partitioner(Key k, Value v)}
	\SetKwFunction{PassRa}{Reduce(Key k, Value v[1..r])}
	\Given{$n$, $k_{c}$ for all $c$, ${}_mS_n$ for all $m$, $\vect{a}^{(l)}_{*k_{c}}$ for all $l$ and $c$}
	\Input{$\tensor{\hat{R}}$}
	\Output{updated $a^{(n)}_{*k_{c}}$ for all $c$}
	\BlankLine
	\PassMa\\
	\Begin{
		$((i_{1},...,i_{N})$, $\hat{r}_{i_{1}...i_{N}})$ $\leftarrow$ v \\
		find $m$ where $i_{n}\in{}_mS_{n}$ \\
		emit $<(m,i_{n}),((i_{1},...,i_{N}), \hat{r}_{i_{1}...i_{N}})>$
	}
	\BlankLine
	\PassPa\\
	\Begin{
		$(m,i_{n})$ $\leftarrow$ k \\
		assign $<k,v>$ to machine $m$
	}
	\BlankLine
	\PassRa\\
	\Begin{
		$(m,i_{n})$ $\leftarrow$ k \\
		$\Omega^{(n)}_{i_{n}}$ entries of $\tensor{\hat{R}}$ $\leftarrow$ v \\ 
		update and emit $a^{(n)}_{i_{n}k_{c}}$ for all $c$
	}
\end{algorithm}

\begin{algorithm} [bpt!]
	\small
	\caption{$\tensor{R}$ update in \CA without local disk caching} \label{alg:NAIVE_CA_R}
	\SetKwInOut{Given}{Given}
	\SetKwInOut{Input}{Input}
	\SetKwInOut{Output}{Output}
	\SetKwFunction{PassMa}{Map(Key k, Value v)}
	\SetKwFunction{PassPa}{Partitioner(Key k, Value v)}
	\SetKwFunction{PassRa}{Reduce(Key k, Value v[1..r])}
	\Given{$k_{c}$ for all $c$, ${}_mS_1$ for all $m$, $\vect{a}^{(l)}_{*k_{c}}$ for all $l$ and $c$}
	\Input{$\tensor{\hat{R}}$}
	\Output{updated $\tensor{R}$}
	\BlankLine
	\PassMa\\
	\Begin{
		$((i_{1},...,i_{N})$, $\hat{r}_{i_{1}...i_{N}})$ $\leftarrow$ v \\
		find $m$ where $i_{1}\in{}_mS_{1}$ \\
		emit $<m,((i_{1},...,i_{N}), \hat{r}_{i_{1}...i_{N}})>$
	}
	\BlankLine
	\PassPa\\
	\Begin{
		$m$ $\leftarrow$ k \\
		assign $<k,v>$ to machine $m$
	}
	\BlankLine
	\PassRa\\
	\Begin{
		\ForEach{$((i_{1},...,i_{N})$, $\hat{r}_{i_{1}...i_{N}})\in$ {\normalfont v[1...r]}}{
			update $r_{i_{1}...i_{N}}$ \\
			emit $((i_{1},...,i_{N})$, $r_{i_{1}...i_{N}})$
		}
	}
\end{algorithm}

\begin{algorithm} [ht!]
	\small
	\caption{\small Data distribution in \CA with local disk caching} \label{alg:mapreduce:data}
	\SetKwInOut{Input}{Input}
	\SetKwInOut{Output}{Output}
	\SetKwFunction{PassMa}{Map(Key k, Value v)}
	\SetKwFunction{PassPa}{Partitioner(Key k, Value v)}
	\SetKwFunction{PassRa}{Reduce(Key k, Value v[1..r])}
	\Input{ $\tensor{X}$, ${}_mS_n$ for all $m$ and $n$ }
	\Output{ ${}_m\Omega^{(n)}$ entries of $\tensor{R}(=\tensor{X})$ for all $m$ and $n$ }
	\BlankLine
	\PassMa\;
	\Begin{
		$((i_{1},...,i_{N})$, $x_{i_{1}...i_{N}})$ $\leftarrow$ v \\
		\For{n = 1,...,$N$}{
			find $m$ where $i_{n}\in{}_mS_{n}$ \\
			emit $<(m,n),((i_{1},...,i_{N}), x_{i_{1}...i_{N}})>$
		}
	}
	\BlankLine
	\PassPa\;
	\Begin{
		$(m,n)$ $\leftarrow$ k \\
		assign $<k,v>$ to machine $m$
	}
	\BlankLine
	\PassRa\;
	\Begin{
		$(m,n)$ $\leftarrow$ k \\
		open a file on the local disk to cache ${}_m\Omega^{(n)}$ entries of $\tensor{R}$\\
		\ForEach{$((i_{1},...,i_{N})$, $x_{i_{1}...i_{N}})\in$ {\normalfont v[1...r]}}{
			write $((i_{1},...,i_{N})$, $x_{i_{1}...i_{N}})$ to the file\\
		}
	}
\end{algorithm}

\subsection{Local Disk Caching}
\label{sec:impl:data}
Typical \MR implementations of \CA and \CD without local disk caching run each parallel step as a separate \MR job.
Algorithms~\ref{alg:NAIVE_CA_A} and \ref{alg:NAIVE_CA_R} describe the \MR implementation of parameter update (update of $a^{(n)}_{*k_{c}}$ for all $c$) and $\tensor{R}$ update, respectively.
$\tensor{\hat{R}}$ computation can be implemented by the similar way with $\tensor{R}$ update.
in this implementation, broadcasting updated parameters is unnecessary because reducers terminate after updating their assigned parameters.
Instead, the updated parameters are saved in the distributed file system and are read at the next step (a separate job). 
Since \CA repeats both $\tensor{R}$ update and $\tensor{\hat{R}}$ computation $K/C$ times and parameter update $KT_{in}N/C$ times at every outer iteration, this implementation repeats distributing $\tensor{R}$ or $\tensor{\hat{R}}$ across machines (the mapper stage of Algorithms~\ref{alg:NAIVE_CA_A} and \ref{alg:NAIVE_CA_R}) $T_{out}K(T_{in}N+2)/C$ times, which is inefficient.

Our implementation reduces this inefficiency by caching data to local disk once they are distributed.
In the \CA implementation with local disk caching, $\tensor{X}$ entries are distributed across machines and cached in the local disk during the map and reduce stages (Algorithm~\ref{alg:mapreduce:data}); and the rest part of \CA runs in the close stage (cleanup stage in Hadoop) using cached data.
Our implementation streams the cached data from the local disk instead of distributing entire $\tensor{R}$ or $\tensor{\hat{R}}$ from the distributed file system when updating factor matrices. For example, ${}_m\Omega^{(n)}$ entries of $\tensor{\hat{R}}$ are streamed from the local disk when the columns of $\mat{A}^{(n)}$ are updated.
The effect of this local disk caching on the actual running time is analyzed in Section~\ref{sec:exp:optimization}.

\subsection{Direct Communication}
\label{sec:impl:communication}
In \MR, it is generally assumed that reducers run independently and do not communicate directly with each other.
However, we adapt the direct communication method using the distributed file system in \cite{beutel2014flexifact} to broadcast parameters among reducers efficiently.
The implementation of parameter broadcast in \CA (i.e., broadcast of $\vect{a}^{(n)}_{*k_{c}}$ for all $c$) based on this method is described in Algorithm~{\ref{alg:mapreduce:communication}} where ${}_{m}\vect{a}^{(n)}_{*k_{c}}$ denotes $\{a^{(n)}_{i_{n}k_{c}}|i_{n}\in {}_{m}S_{n}\}$.

\begin{algorithm} [t!]
	\small
	\caption{Parameter broadcast in \CA} \label{alg:mapreduce:communication}
	\SetKwInOut{Input}{Input}
	\SetKwInOut{Output}{Output}
	\Input{${}_{m}\vect{a}^{(n)}_{*k_{c}}$ for all $c$ (parameters to broadcast)}
	\Output{ $\vect{a}^{(n)}_{*k_{c}}$ for all c (parameters received from others)}
	\Begin{
		create a data file ${}_mA$ on the distributed file system (DFS) \\
		write ${}_m\vect{a}^{(n)}_{*k}$ on the data file \\
		create a dummy file ${}_mD$ on DFS \\
		\While{not all data files were read}{
			get the list of dummy files from DFS \\
			\ForEach {${}_{m'}D$ {\normalfont in the list}}{
				\If{${}_{m'}A$ were not read}{
					read ${}_{m'}\vect{a}^{(n)}_{*k}$ from ${}_{m'}A$ \\
				}
			}
		}
		
	}
\end{algorithm}
%

\section{Experiments}
\label{sec:experiment}
To evaluate \CA and \CD, we design and conduct experiments to answer the following questions:
\begin{itemize*}
  \item \textbf{Q1: Data scalability (Section~\ref{sec:exp:scale:data}).} How do \CA, \CD, and other methods scale with regard to the following properties of an input tensor: dimension, the number of observations, mode length, and rank?
  \item \textbf{Q2: Machine scalability (Section~\ref{sec:exp:scale:machine}).} How do \CA, \CD, and other methods scale with regard to the number of machines?
  \item \textbf{Q3: Convergence (Section~\ref{sec:exp:converge}).} How quickly and accurately do \CA, \CD, and other methods factorize real-world tensors?
  \item \textbf{Q4: Optimization (Section~\ref{sec:exp:optimization}).} How much do the local disk caching and the greedy row assignment improve the speed of \CA and \CD? Can these optimization techniques be applied to other methods?
  \item \textbf{Q5: Effects of $\mathbf{T_{in}}$ (Section~\ref{sec:exp:T})} How do different numbers of inner iterations ($T_{in}$) affect the convergence of \CD? 
  \item \textbf{Q6: Effects of $\mathbf{C}$ (Section~\ref{sec:exp:C})} How do different numbers of columns updated at a time ($C$) affect the convergence of \CA?
\end{itemize*}
Other methods include ALS, \FF, and PSGD, which are explained in Section~\ref{sec:prelim:tc_alg}.
All experiments are focused on the distributed version of each method, which is the most suitable to achieve our purpose of handling large-scale data.

\begin{figure*}[tbp!]
	\vspace{-2mm}
    \subfigure[Dimension]{
    	\includegraphics[width=0.23\linewidth] {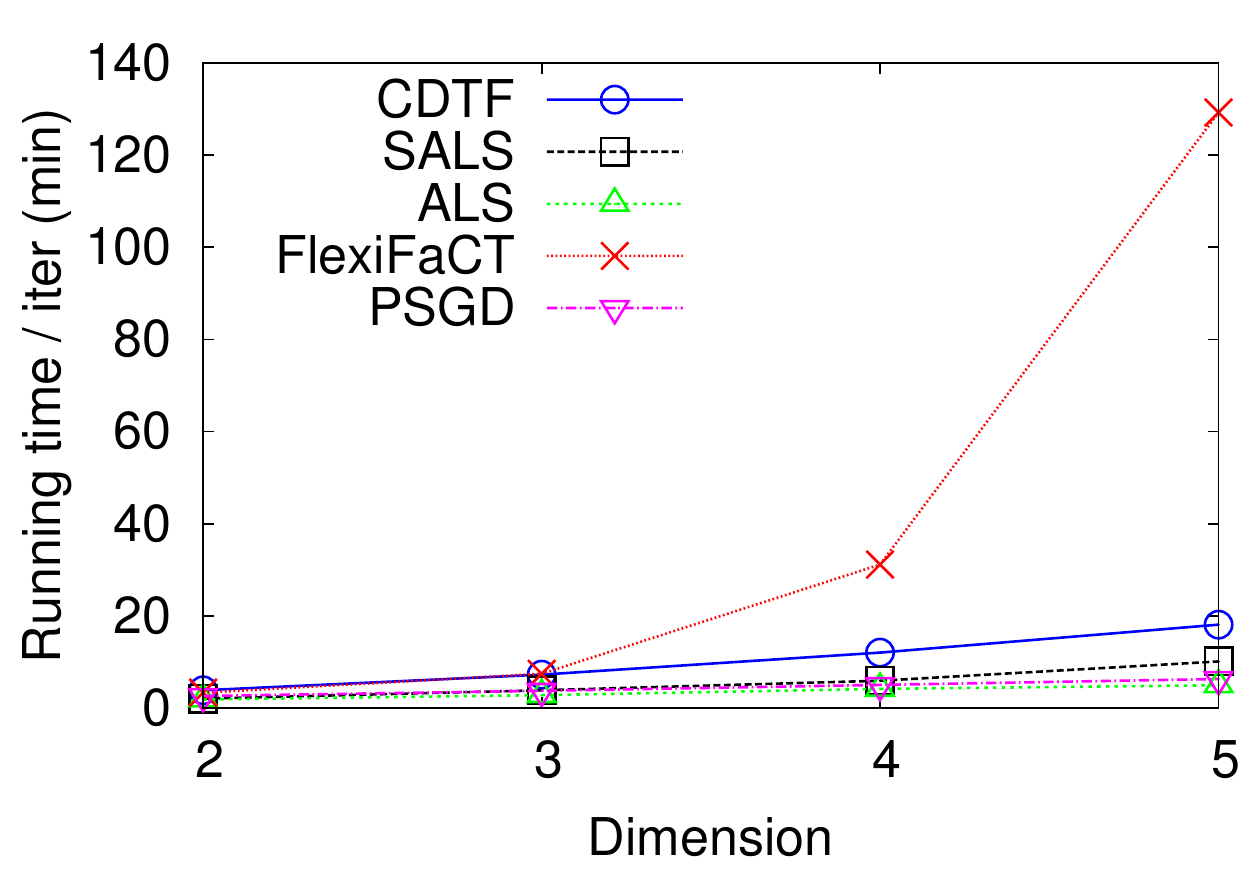}
    	\label{fig:data_scale:dimension}
    }
    \subfigure[Number of observations]{
    	\includegraphics[width=0.23\linewidth] {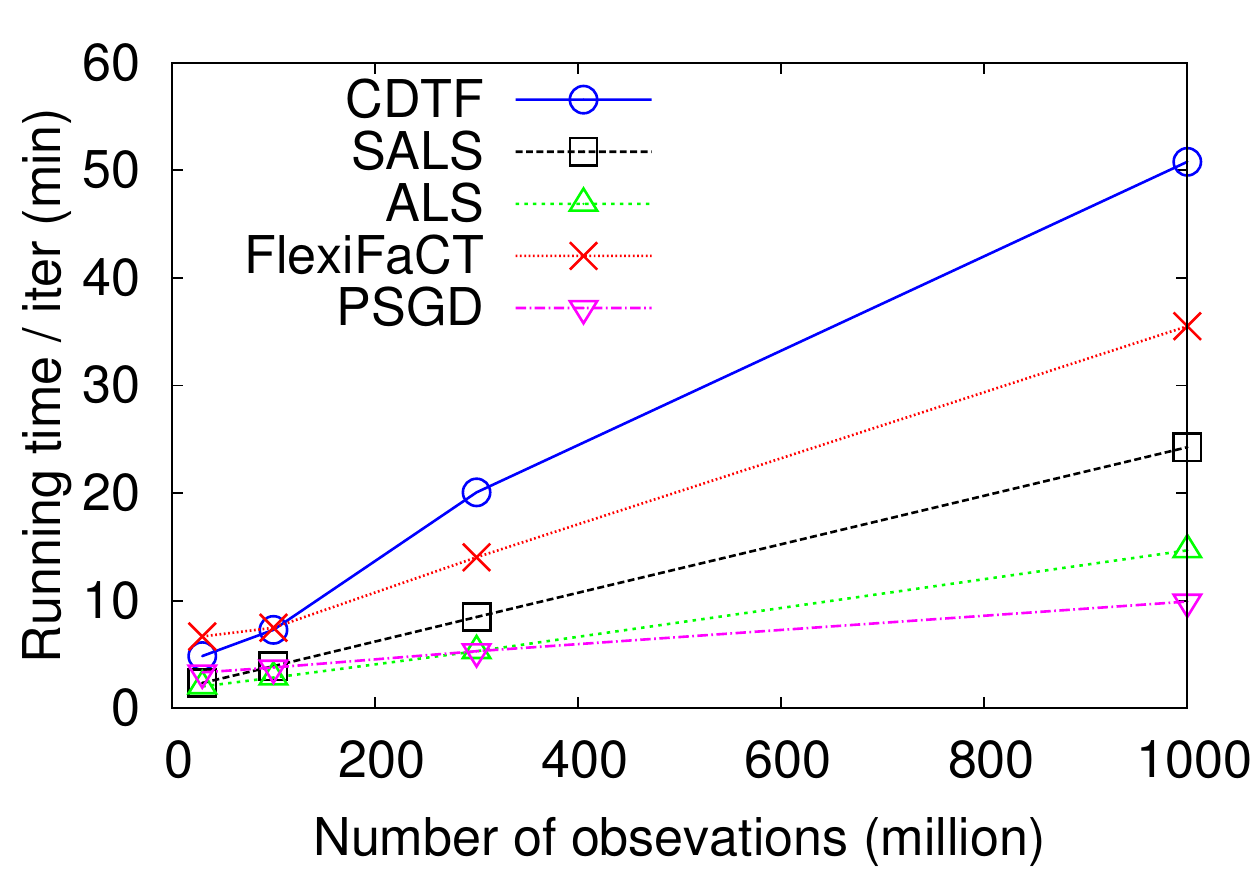}
    	\label{fig:data_scale:omega}
    }
    \subfigure[Mode length]{
    	\includegraphics[width=0.23\linewidth] {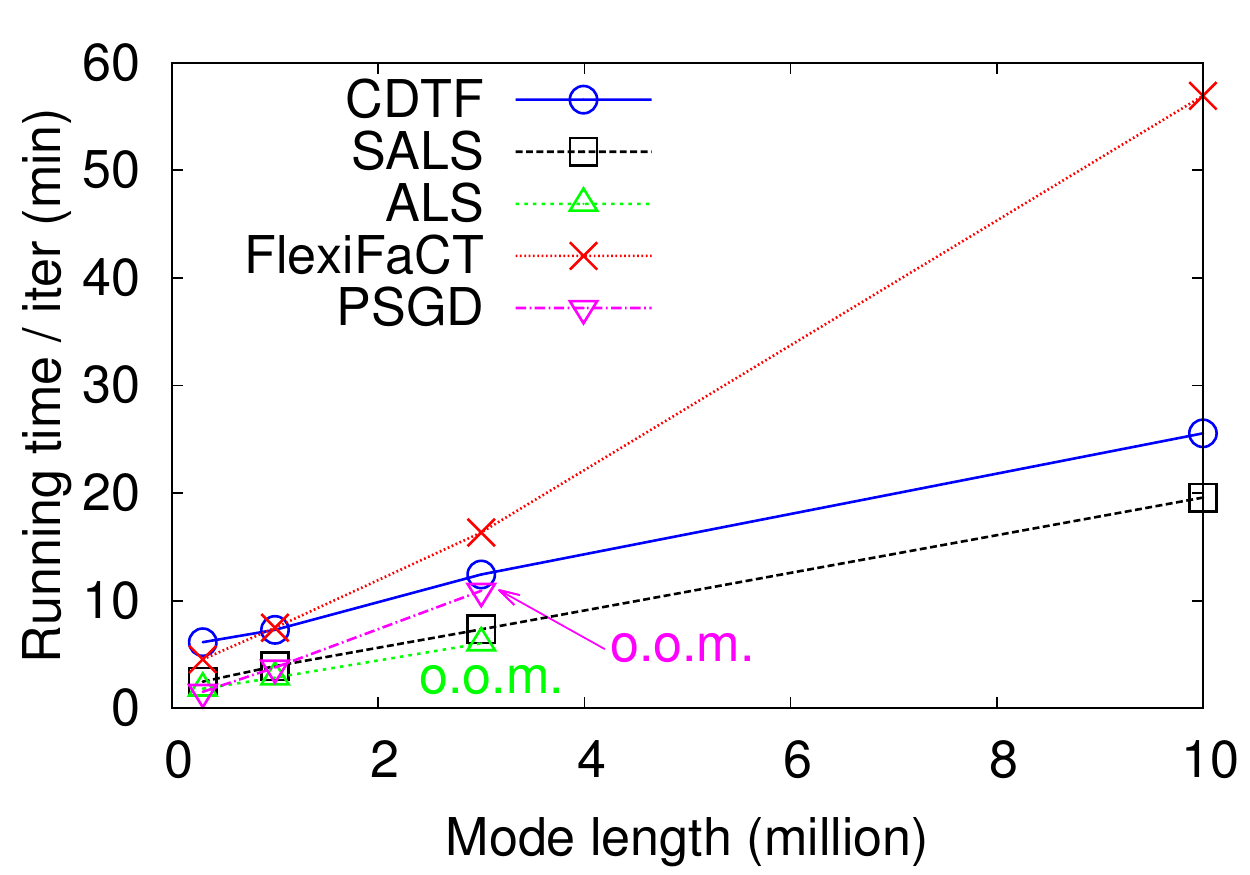}
    	\label{fig:data_scale:mode}
    }
    \subfigure[Rank]{
	    \includegraphics[width=0.23\linewidth] {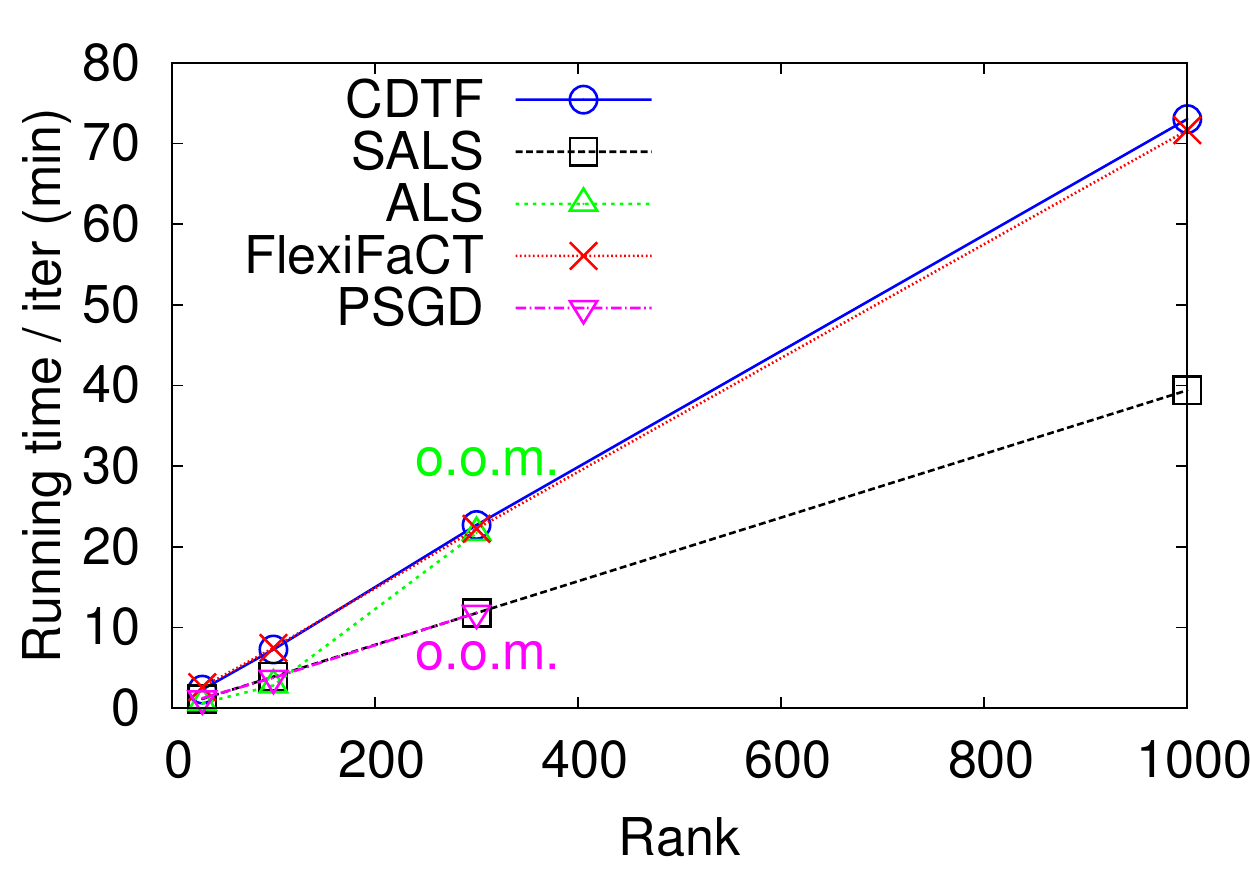}
    	\label{fig:data_scale:rank}
    }
    \caption{\label{fig:data_scale}
        Scalability with regard to each aspect of data. o.o.m. : out of memory. Only \CA and \CD scale with all the aspects.
   }
\end{figure*}

\begin{table}[tbp]
	\centering
	\caption{Summary of real-world datasets.}
	\begin{tabular}{c|ccc}
		\toprule
		\textbf{} & \textbf{Movielens}${}_4$ & \textbf{Netflix}${}_3$ & \textbf{Yahoo-music}${}_4$\\
		\midrule
		$N$ & 4 & 3 & 4 \\
		$I_{1}$ & 715,670 & 2,649,429 & 1,000,990 \\
		$I_{2}$ & 65,133 & 17,770 & 624,961 \\
		$I_{3}$ & 169 & 74 & 133 \\
		$I_{4}$ & 24 & - & 24 \\
		$|\Omega|$ & 93,012,740 & 99,072,112 & 252,800,275 \\
		$|\Omega|_{test}$ & 6,987,800 & 1,408,395 & 4,003,960 \\
		$K$ & 20 & 40 & 80 \\
		$\lambda$ & 0.01 & 0.02 & 1.0 \\
		$\eta_{0}$ & 0.01 & 0.01 & $10^{-5}$ (\FF)\\
		$ $ & & & $10^{-4}$ (PSGD) \\
		\bottomrule
	\end{tabular}
	\label{tab:real_data}
\end{table}

\begin{table}[tbp]
	\centering
	\caption{Scale of synthetic datasets. B: billion, M: million, K: thousand. The length of every mode is equal to $I$.}
	\begin{tabular}{c|cccc}
		\toprule
		\textbf{} & \textbf{S1} & \textbf{S2} (default) & \textbf{S3} & \textbf{S4}\\
		\midrule
		$N$ & 2 & 3 & 4 & 5 \\
		$I$ & 300K & 1M & 3M & 10M \\
		$|\Omega|$ & 30M & 100M & 300M & 1B \\
		$K$ & 30 & 100 & 300 & 1K \\
		\bottomrule
	\end{tabular}
	\label{tab:syn_data}
\end{table}

\subsection{Experimental Settings}
\label{sec:exp:setting}
\subsubsection{Cluster}
We run experiments on a 40-node Hadoop cluster.
Each node has an Intel Xeon E5620 2.4GHz CPU.
The maximum heap memory size per reducer is set to 8GB.

\subsubsection{Data}
We use both real-world and synthetic datasets most of which are available at \url{http://kdmlab.org/sals}. 
The real-world tensor data used in our experiments are summarized in Table~\ref{tab:real_data} with the following details:
\begin{itemize*}
	\item \textbf{Movielens$_{4}$\footnote{http://grouplens.org/datasets/movielens}:}
	Movie rating data from MovieLens, an online movie recommender service.
	We convert them into a four-dimensional tensor where the third mode and the fourth mode correspond to (year, month) and hour-of-day when the movie is rated, respectively.
	The rates range from 1 to 5.
	\item \textbf{Netflix$_{3}$\footnote{http://www.netflixprize.com}:}
	Movie rating data used in Netflix prize.
	We regard them as a three-dimensional tensor where the third mode corresponds to (year, month) when the movie is rated.
	The rates range from 1 to 5.
	\item \textbf{Yahoo-music$_{4}$\footnote{http://webscope.sandbox.yahoo.com/catalog.php?datatype=c}:}
	Music rating data used in KDD CUP 2011.
	We convert them into a four-dimensional tensor by the same way in Movielens$_{4}$.
	Since exact year and month are not provided, we use the values obtained by dividing the provided data (the number of days passed from an unrevealed date) by 30.
	The rates range from 0 to 100.
\end{itemize*}
For reproducibility, we use the original training/test split offered by the data providers. 
Synthetic tensors are created by the procedure used in \cite{niu2011hogwild} to create Jumbo dataset.
The scales of the synthetic datasets used are summarized in Table~\ref{tab:syn_data}.

\subsubsection{Implementation and Parameter Settings}
All the methods in Table~\ref{tab:algorithms} are implemented in Java with Hadoop 1.0.3.
The local disk caching, the direct communication, and the greedy row assignment are applied to all the methods if possible.
All our implementations use weighted-$\lambda$-regularization \cite{zhou2008large}.
For \CA and \CD, $T_{in}$ is set to 1, and $C$ is set to 10, unless otherwise stated.
The learning rate of \FF and PSGD at $t$th iteration is set to $2\eta_{0}/(1+t)$ following the open-sourced \FF implementation (\url{http://alexbeutel.com/l/flexifact/}).
The number of reducers is set to 5 for \FF, 20 for PSGD, and 40 for the other methods, each of which leads to the best performance on the machine scalability test in Section~\ref{sec:exp:scale:machine}, unless otherwise stated.

\subsection{Data Scalability}
\label{sec:exp:scale:data}

\subsubsection{Scalability with Each Factor (Figure~\ref{fig:data_scale})}
We measure the scalability of \CD, \CA, and the competitors with regard to the dimension, number of observations, mode length, and rank of an input tensor.
When measuring the scalability with regard to a factor, the factor is scaled up from S1 to S4 while all other factors are fixed at S2 as summarized in Table~\ref{tab:syn_data}.
As seen in Figure~\ref{fig:data_scale:dimension}, \FF does not scale with dimension because of its communication cost, which increases exponentially with dimension.
ALS and PSGD are not scalable with mode length and rank due to their high memory requirements as Figures~\ref{fig:data_scale:mode} and \ref{fig:data_scale:rank} show. They require up to 11.2GB, which is 48$\times$ of 234MB that \CD requires and 10$\times$ of 1,147MB that \CA requires.
Moreover, the running time of ALS increases rapidly with rank owing to its cubically increasing computational cost.
Only \CA and \CD are scalable with all the factors as summarized in Table~\ref{tab:scale}.
Their running times increase linearly with all the factors except dimension, with which they increase slightly faster due to the quadratically increasing computational cost.

\begin{figure}[tbp]
    \hspace{-0.04\linewidth}
	\subfigure[Overall scalability]
    {
		\includegraphics[width=0.48\linewidth]{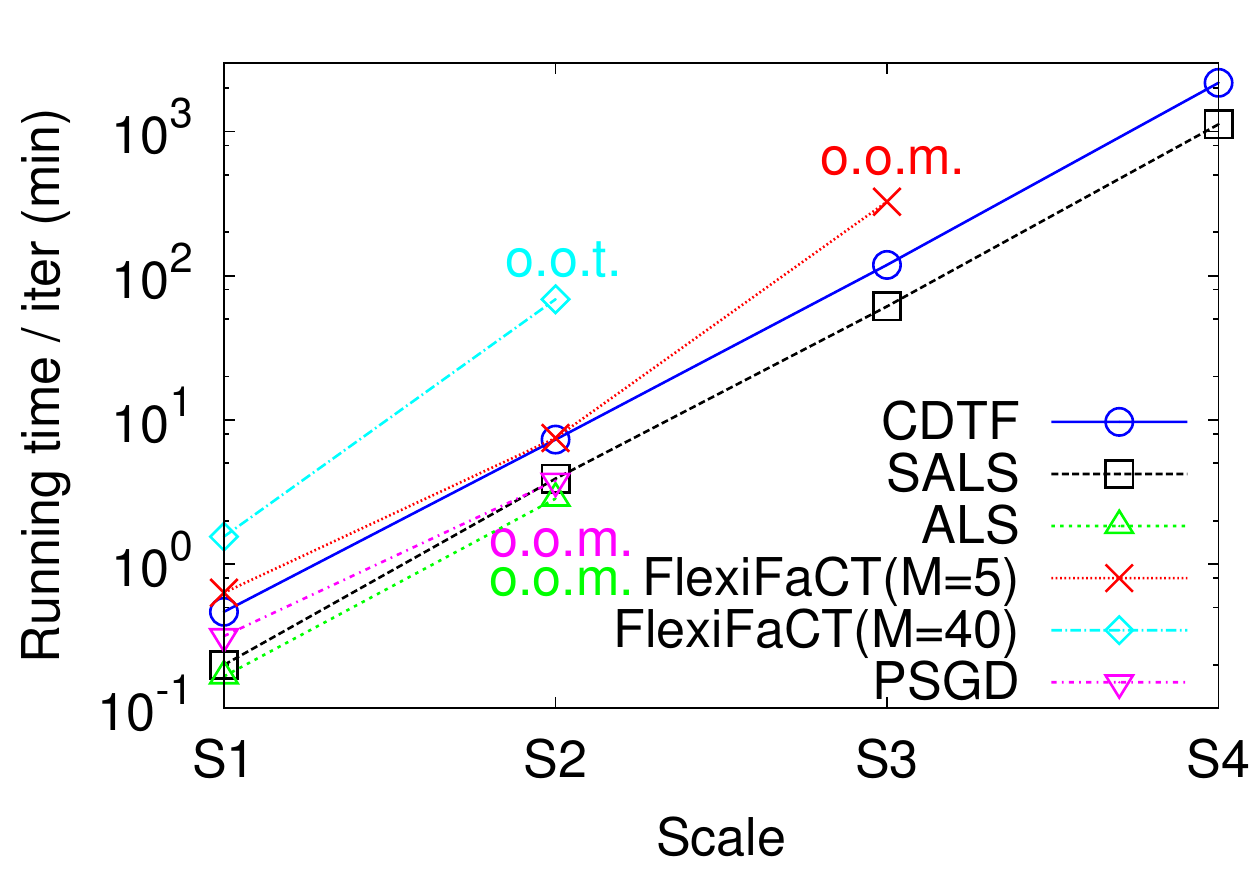}
		\label{fig:data_scale_overall}
	}
	\subfigure[Machine scalability]
	{
		\includegraphics[width=.48\linewidth]{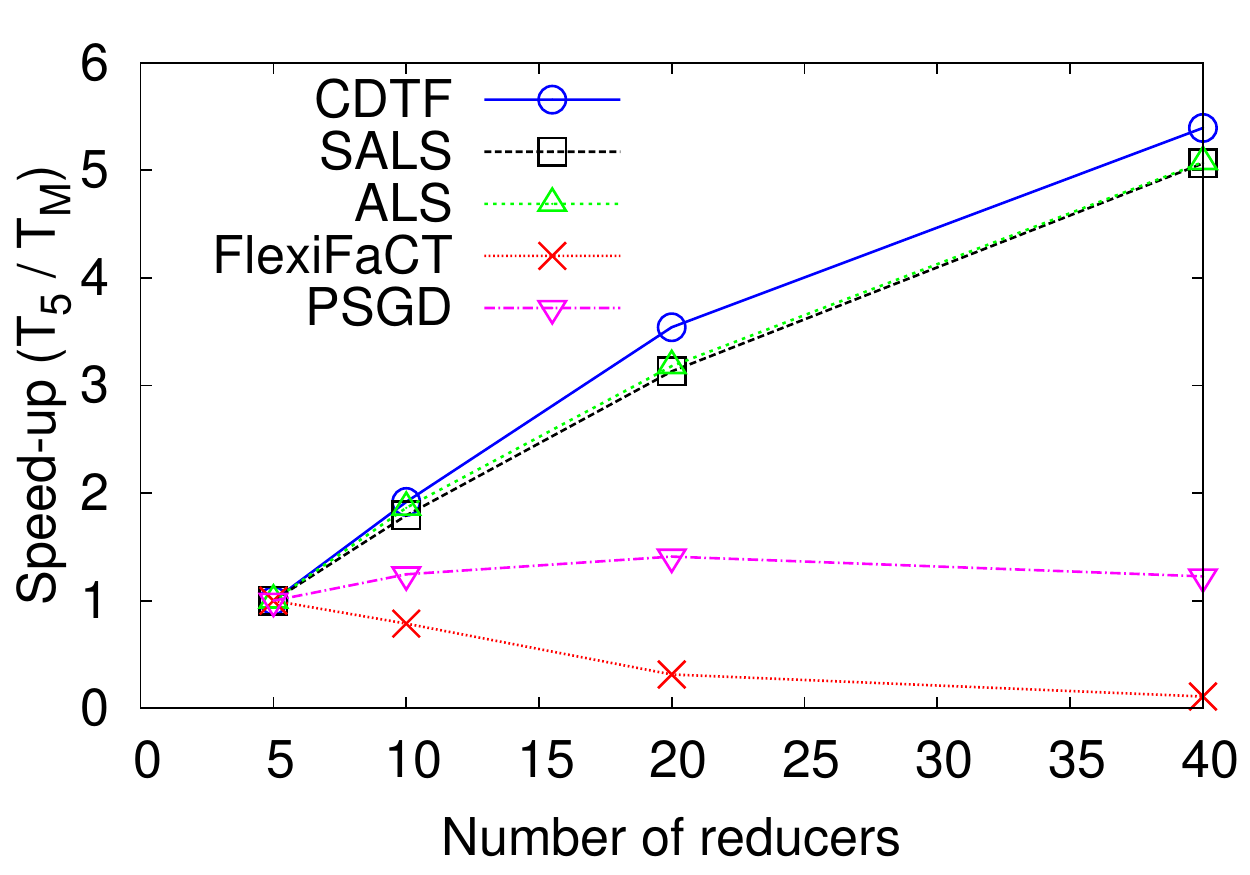}
		\label{fig:machine}
	}
    \caption{
    (a) Overall scalability. o.o.m. : out of memory, o.o.t. : out of time (takes more than a week).
    Only \CA and \CD scale up to the largest scale S4.
    (b) Machine scalability. Computations of \CA and \CD are efficiently distributed across machines.
    }
\end{figure}

\subsubsection{Overall Scalability (Figure~\ref{fig:data_scale_overall})}
We measure the scalability of the methods by scaling up all the factors together from S1 to S4.
The scalability of \FF with five machines, ALS, and PSGD is limited owing to their high memory requirements.
ALS and PSGD require almost 186GB to handle S4, which is 493$\times$ of 387MB that \CD requires and 100$\times$ of 1,912MB that \CA requires.
\FF with 40 machines does not scale over S2 due to its rapidly increasing communication cost.
Only \CA and \CD scale up to S4, and there is a trade-off between them: \CA runs faster, and \CD is more memory-efficient.

\subsection{Machine Scalability (Figure~\ref{fig:machine})}
\label{sec:exp:scale:machine}
We measure the speed-ups ($T_{5}/T_{M}$ where $T_{M}$ is the running time with $M$ reducers) of the methods on the S2 scale dataset by increasing the number of reducers.
The speed-ups of \CD, \CA, and ALS increase linearly at the beginning and then flatten out slowly owing to their fixed communication cost which does not depend on the number of reducers.
The speed-up of PSGD flattens out fast, and PSGD even slightly slows down at $40$ reducers because of increased overhead.
\FF slows down as the number of reducers increases because of its rapidly increasing communication cost.

\subsection{Convergence (Figure~\ref{fig:conv})}
\label{sec:exp:converge}
We compare how quickly and accurately each method factorizes real-world tensors.
Accuracies are calculated at each iteration by root mean square error (RMSE) on a held-out test set, which is a measure commonly used by recommender systems.
Table~\ref{tab:real_data} describes $K$, $\lambda$, and $\eta_{0}$ values used for each dataset.
They are determined by cross validation.
Owing to the non-convexity of \eqref{eq:TF}, each algorithm may converge to local minima with different accuracies.
In all datasets (results on the Movielens$_{4}$ dataset are omitted for space reasons), \CA is comparable with ALS, which converges the fastest to the best solution, and \CD follows them.
PSGD converges the slowest to the worst solution due to the non-identifiability of \eqref{eq:TF} \cite{gemulla2011large}.
\begin{figure}[tbp]
    \hspace{-0.04\linewidth}
	\subfigure[Netflix$_{3}$]
	{
		\includegraphics[width=0.48\linewidth]{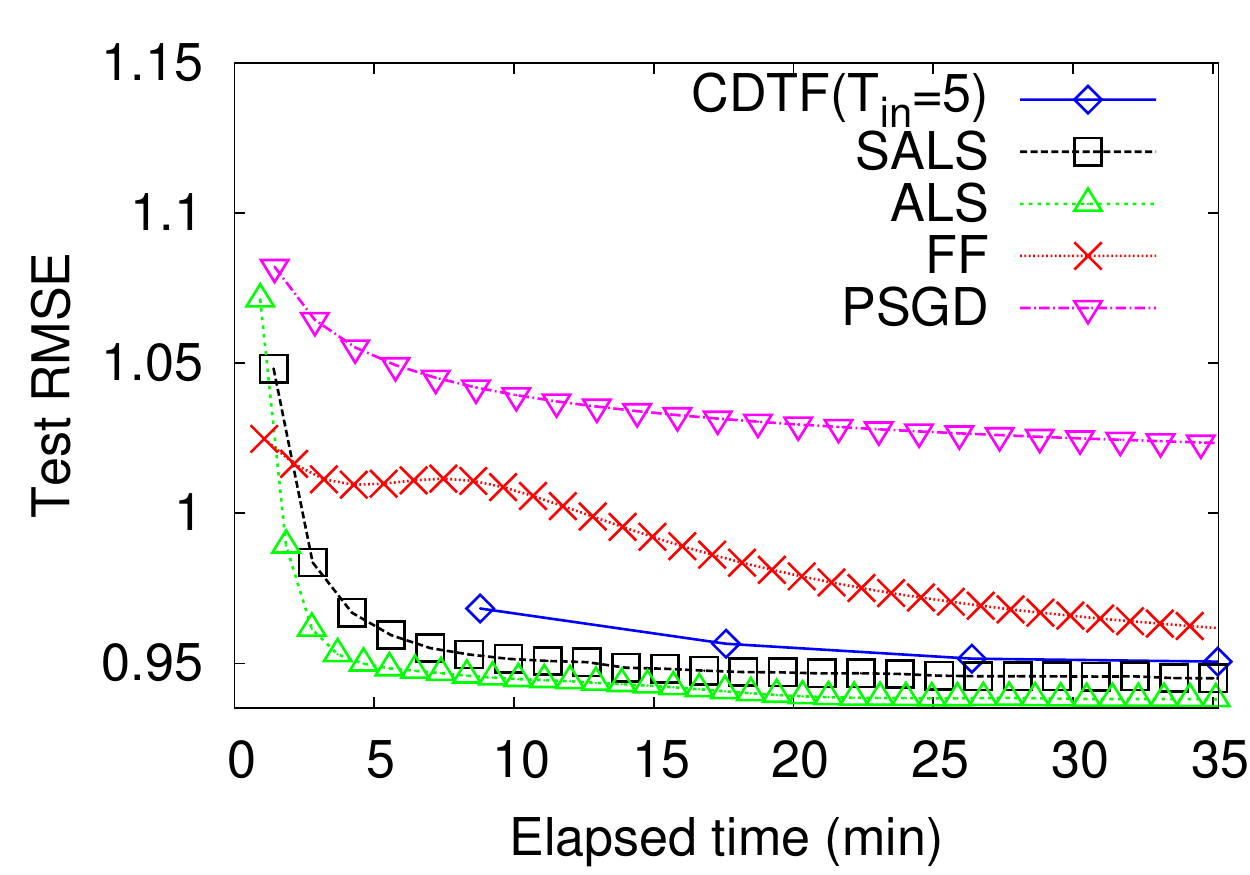}
		\label{fig:conv:nf:hadoop}
	}
	\subfigure[Yahoo-music${}_4$]
	{
		\includegraphics[width=0.48\linewidth]{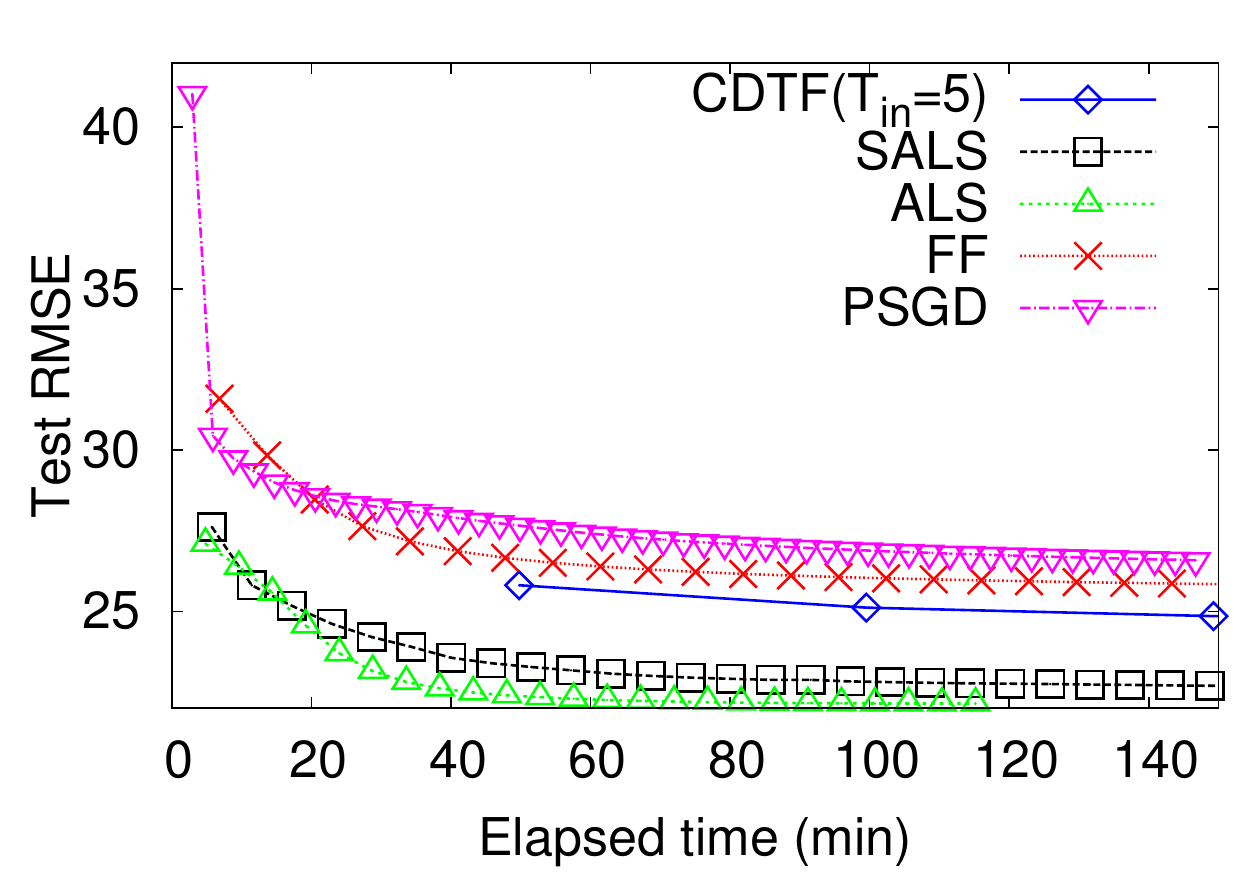}
		\label{fig:conv:yahoo:hadoop}
	}
	\caption{\label{fig:conv}
    Convergence speed on real-world datasets.
    \CA is comparable with ALS, which converges fastest to the best solution, and \CD follows them.
    }
\end{figure}

\subsection{Optimization (Figure~\ref{fig:impl})}
\label{sec:exp:optimization}
We measure how our proposed optimization techniques, the local disk caching and the greedy row assignment, affect the running time of \CD, \CA, and the competitors on real-world datasets.
The direct communication method explained in Section~\ref{sec:impl:communication} is applied to all the implementations if necessary.
The local disk caching speeds up \CD up to $65.7\times$, \CA up to $15.5\times$, and the competitors up to $4.8\times$.
The speed-ups of \CA and \CD are the most significant because of the highly iterative nature of \CA and \CD.
Additionally, the greedy row assignment speeds up \CD up to $1.5\times$; \CA up to $1.3\times$; and the competitors up to $1.2\times$ compared with the second best one.
It is not applicable to PSGD, which does not distribute parameters row by row.

\begin{figure}[tbp!]
\centering
	\hspace{-0.04\linewidth}
    \subfigure[Netflix${}_3$]{
       \includegraphics[width=0.48\linewidth] {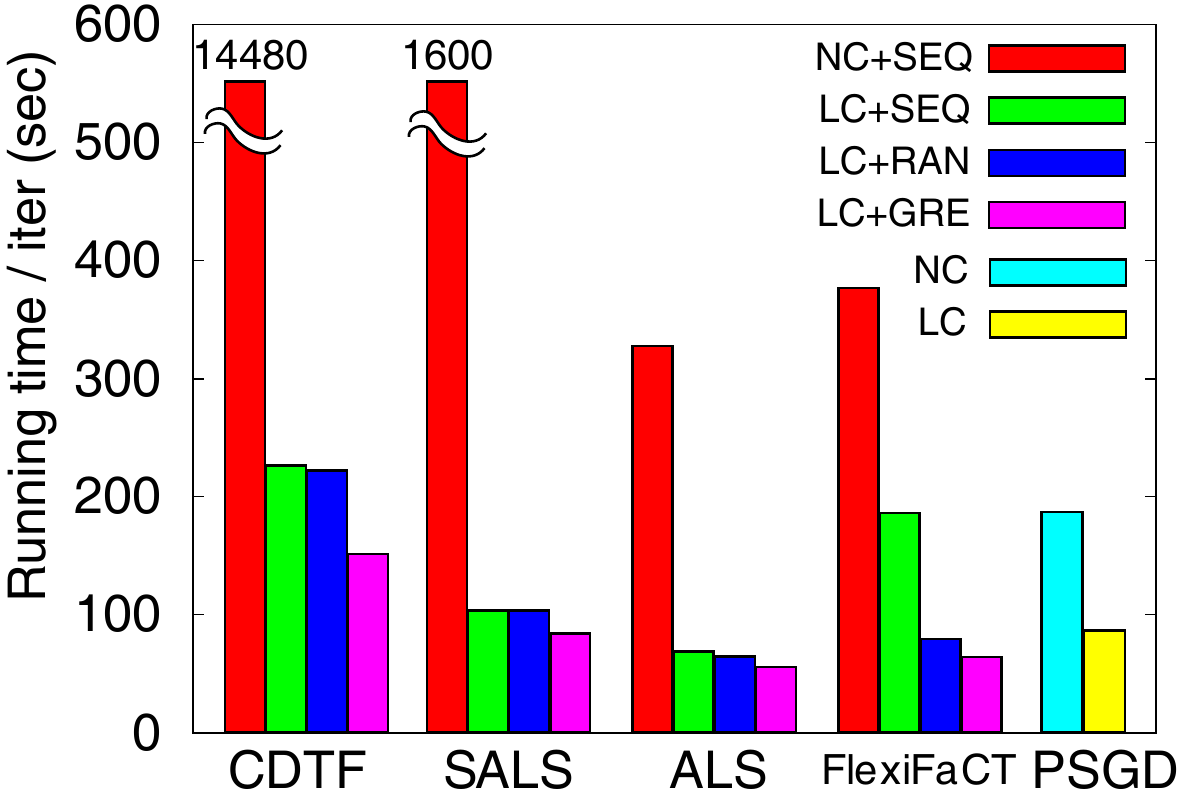}
     }
     \hspace{-0.045\linewidth}
     \subfigure[Yahoo-music${}_4$]{
       \includegraphics[width=0.48\linewidth] {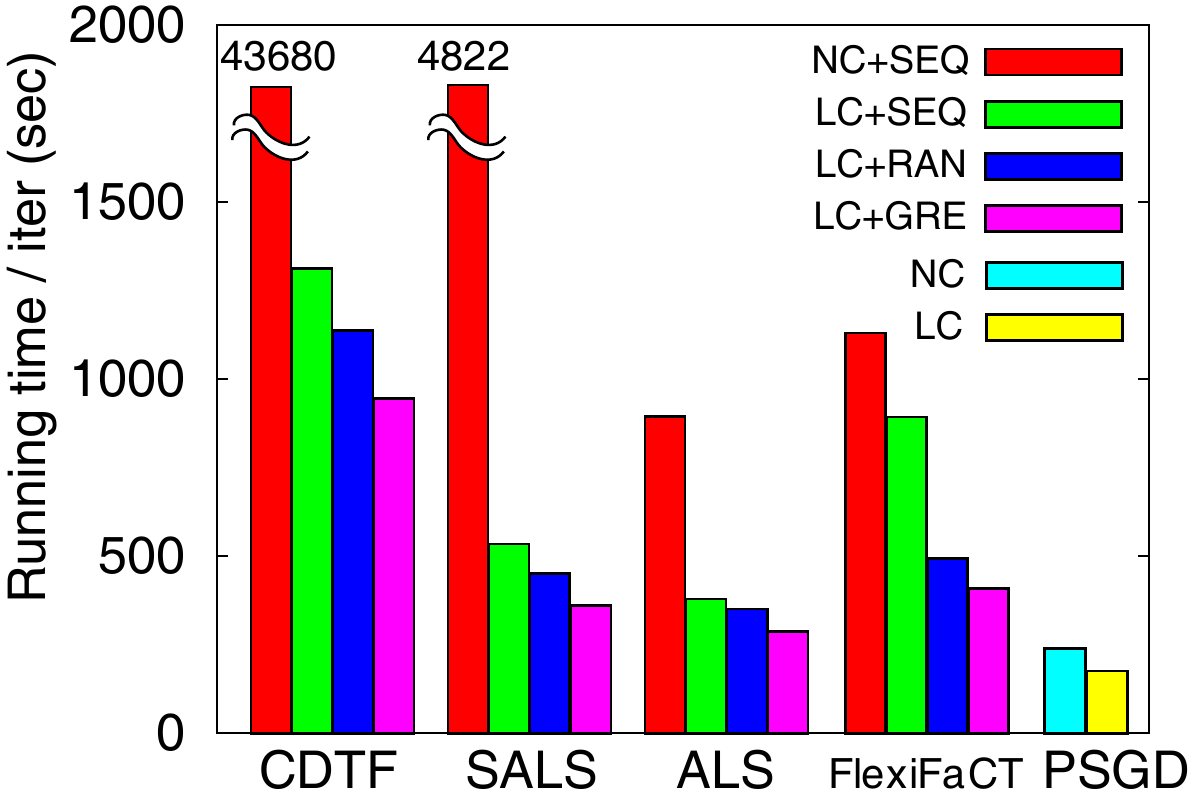}
     }
    \caption{Effects of optimization techniques on running times. NC: no caching, LC: local disk caching, SEQ: sequential row assignment\protect\footnotemark, RAN: random row assignment, GRE: greedy row assignment. Our proposed optimization techniques (LC+GRE) significantly accelerate \CD, \CA, and also their competitors.}
    \label{fig:impl}
\end{figure}
\footnotetext{ ${}_mS_{n}=\{i_{n}\in\mathbb{N}|\frac{I_{n}\times (m-1)}{M}< i_{n}\leq \frac{I_{n}\times m}{M}\}$} 

\subsection{Effects of Inner Iterations (Figure~\ref{fig:T_CONV})}
\label{sec:exp:T}
We compare the convergence properties of \CD with different $T_{in}$ values.
As $T_{in}$ increases, \CD tends to converge more stably to better solutions (with lower test RMSE).
However, there is an exception, $T_{in}=1$ in the Netflix${}_{3}$ dataset, which converges to the best solution.

\subsection{Effects of the Number of Columns Updated at a Time (Figure~\ref{fig:C_CONV})} 
\label{sec:exp:C}
We compare the convergence properties of \CA with different $C$ values.
$T_{in}$ is fixed to one in all cases.
As $C$ increases, \CA tends to converge faster to better solutions (with lower test RMSE) although it requires more memory as explained in Theorem~\ref{theorem:memory}.
With $C$ above 20, however, convergence speed starts to decrease.
This trend is related to the running time per iteration, which shows the same trend as seen in Figure~\ref{fig:C_TIME}.
As $C$ increases, the amount of disk I/O declines since it depends on the number of times that the entries of $\tensor{R}$ or $\tensor{\hat{R}}$ are streamed from disk, which is inversely proportional to $C$.
Conversely, computational cost increases quadratically with regard to $C$.
At low $C$ values, the decrease in the amount of disk I/O is greater and leads to a downward trend of running time per iteration. The opposite happens at high $C$ values.

\begin{figure}[bpt!]
	\centering
	\hspace{-0.04\linewidth}
	\subfigure[Netflix${}_3$]{
		\includegraphics[width=0.49\linewidth] {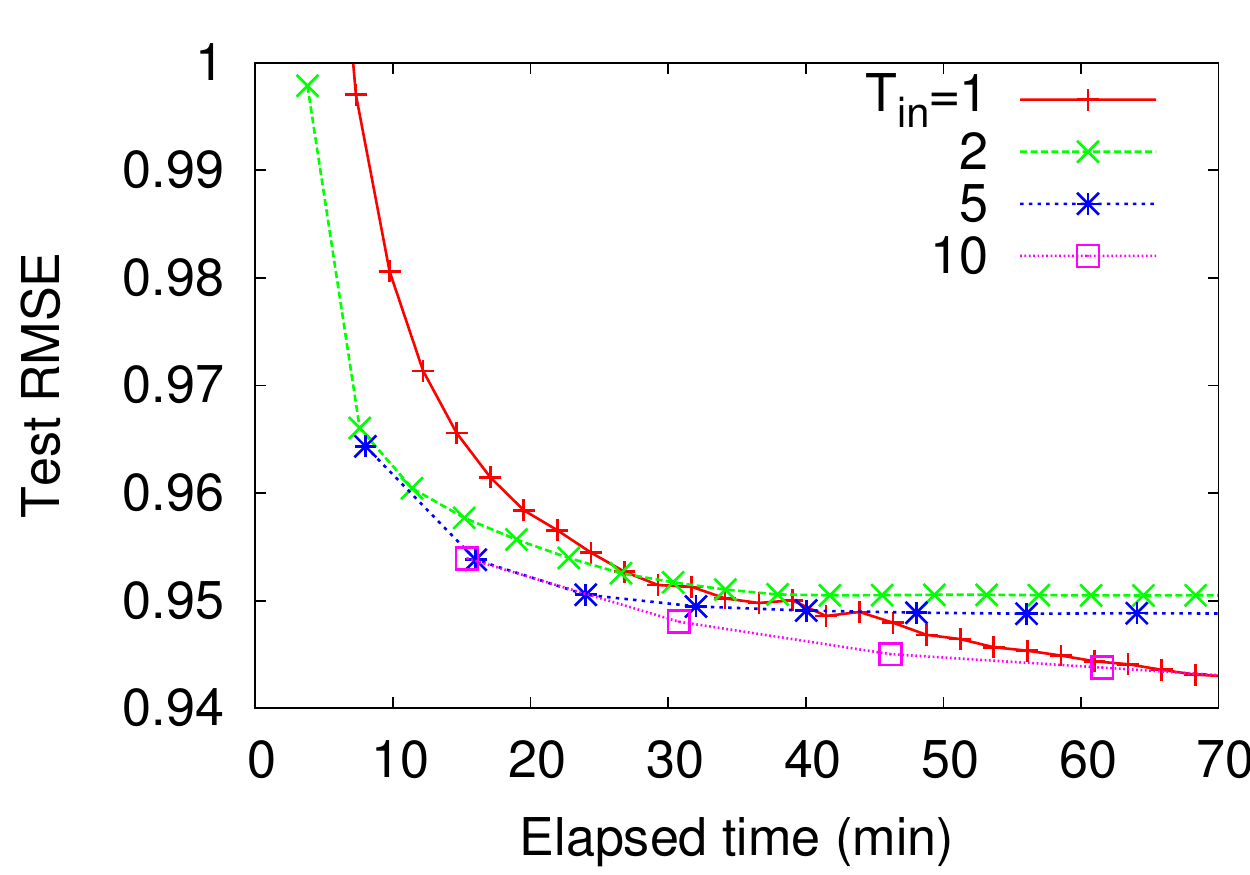}
	}
	\hspace{-0.045\linewidth}
	\subfigure[Yahoo-music${}_4$]{
		\includegraphics[width=0.49\linewidth] {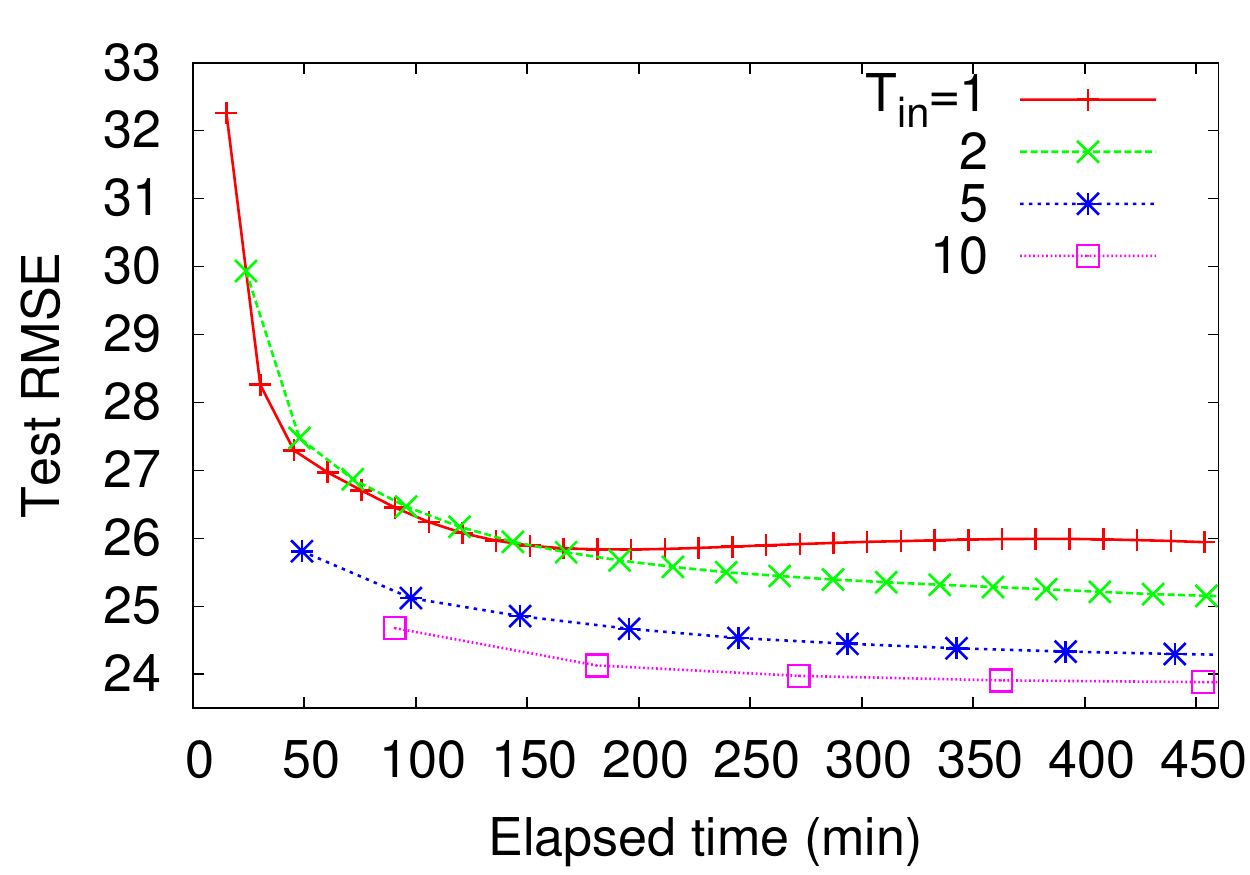}
	}
	\caption{Effects of inner iterations ($T_{in}$) on the convergence of \CD.
		\CD tends to converge stably to better solutions as $T_{in}$ increases.}
	\label{fig:T_CONV}
\end{figure} 

\begin{figure}[bpt!]
	\centering
	\hspace{-0.04\linewidth}
	\subfigure[Netflix${}_3$]{
		\includegraphics[width=0.49\linewidth] {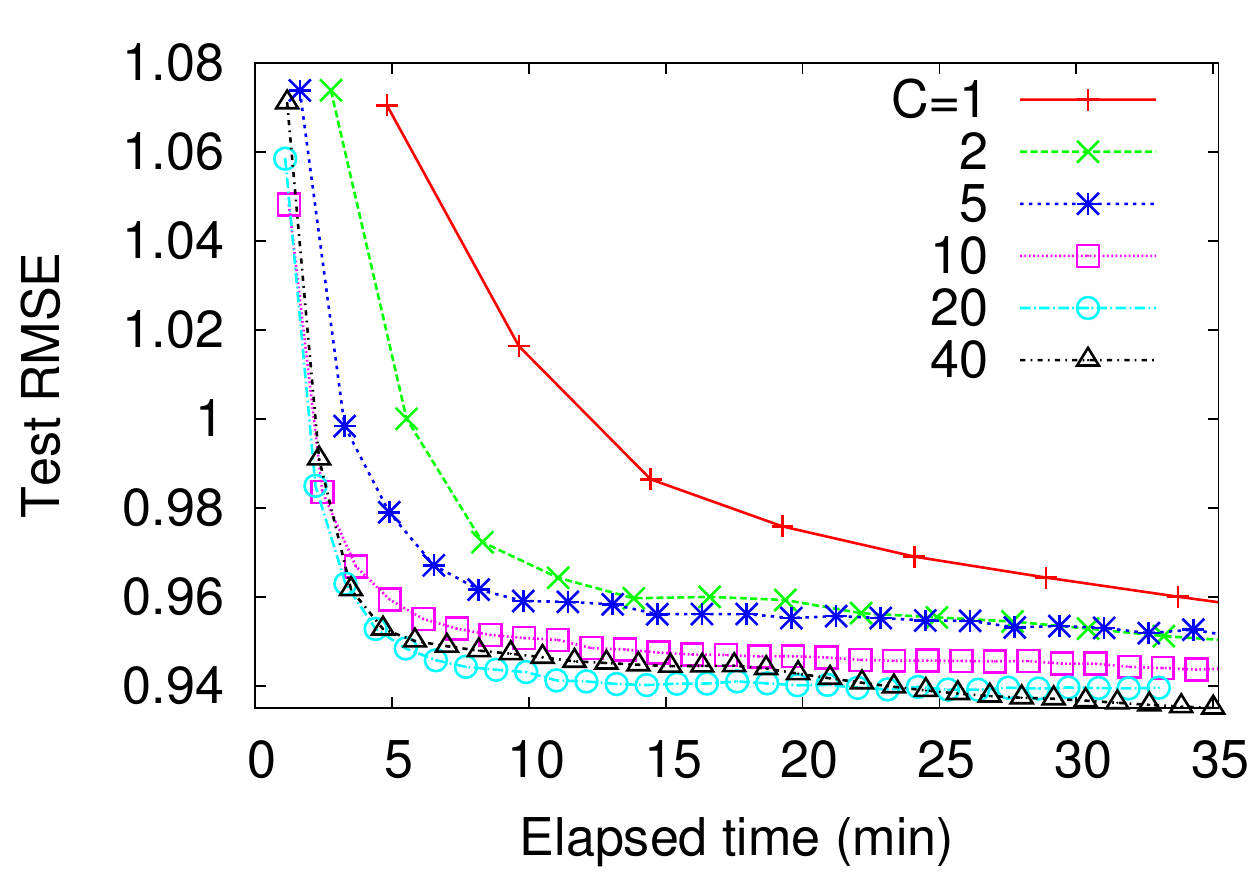}
	}
	\hspace{-0.045\linewidth}
	\subfigure[Yahoo-music${}_4$]{
		\includegraphics[width=0.49\linewidth] {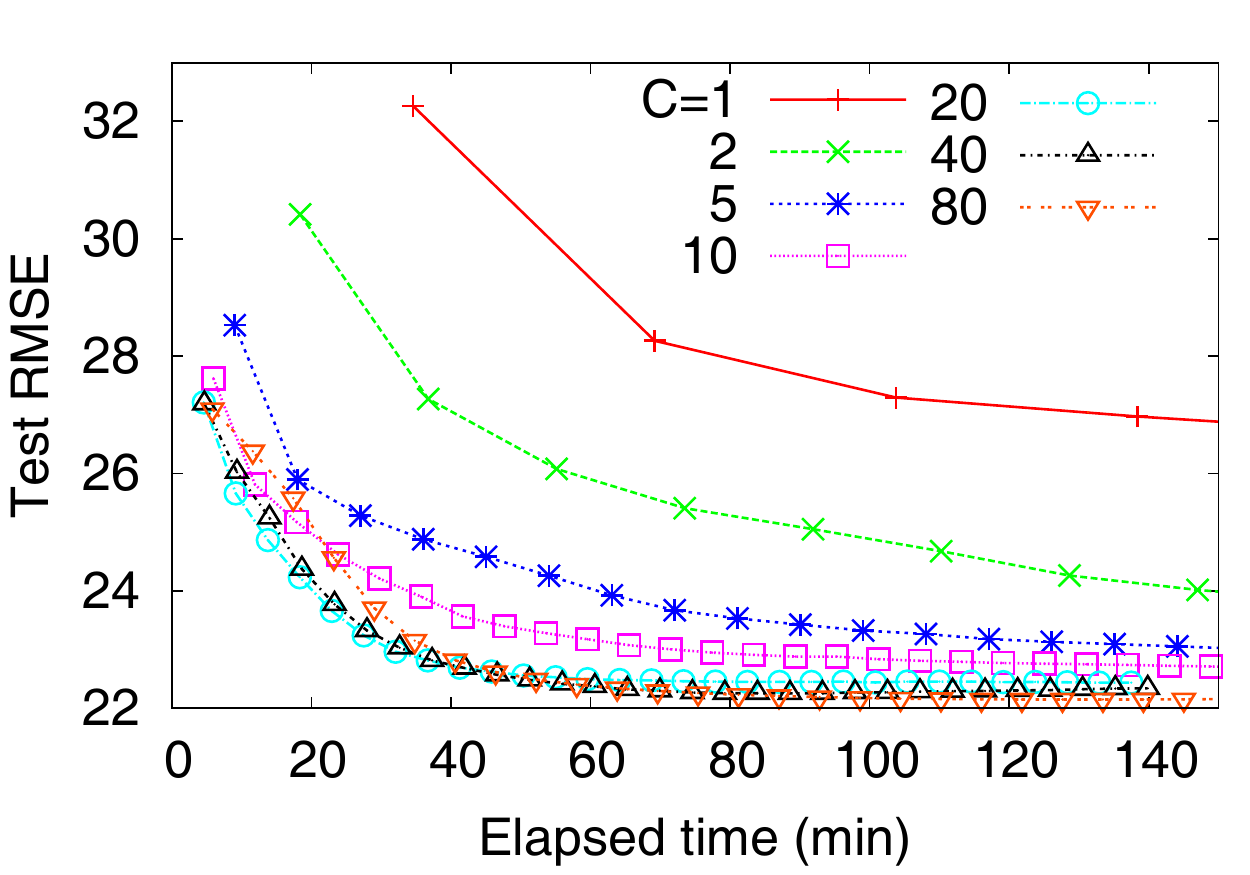}
	}
	\caption{Effects of the number of columns updated at a time ($C$) on the convergence of \CA. \CA tends to converge faster to better solutions as $C$ increases. However, its convergence speed decreases at $C$ above 20.}
	\label{fig:C_CONV}
\end{figure} 

\begin{figure}[bpt!]
	\centering
	\hspace{-0.04\linewidth}
	\subfigure[Netflix${}_3$]{
		\includegraphics[width=0.48\linewidth] {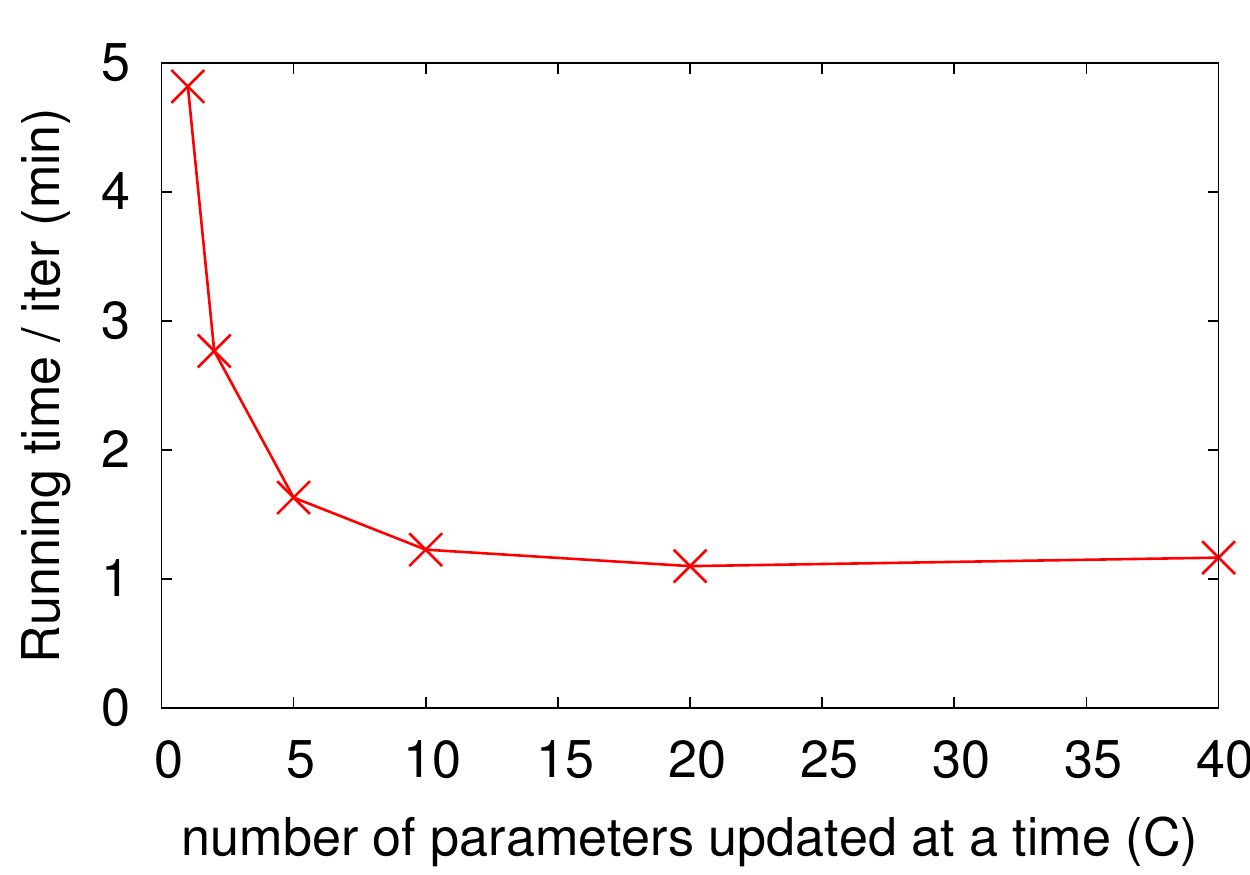}
	}
	\hspace{-0.045\linewidth}
	\subfigure[Yahoo-music${}_4$]{
		\includegraphics[width=0.48\linewidth] {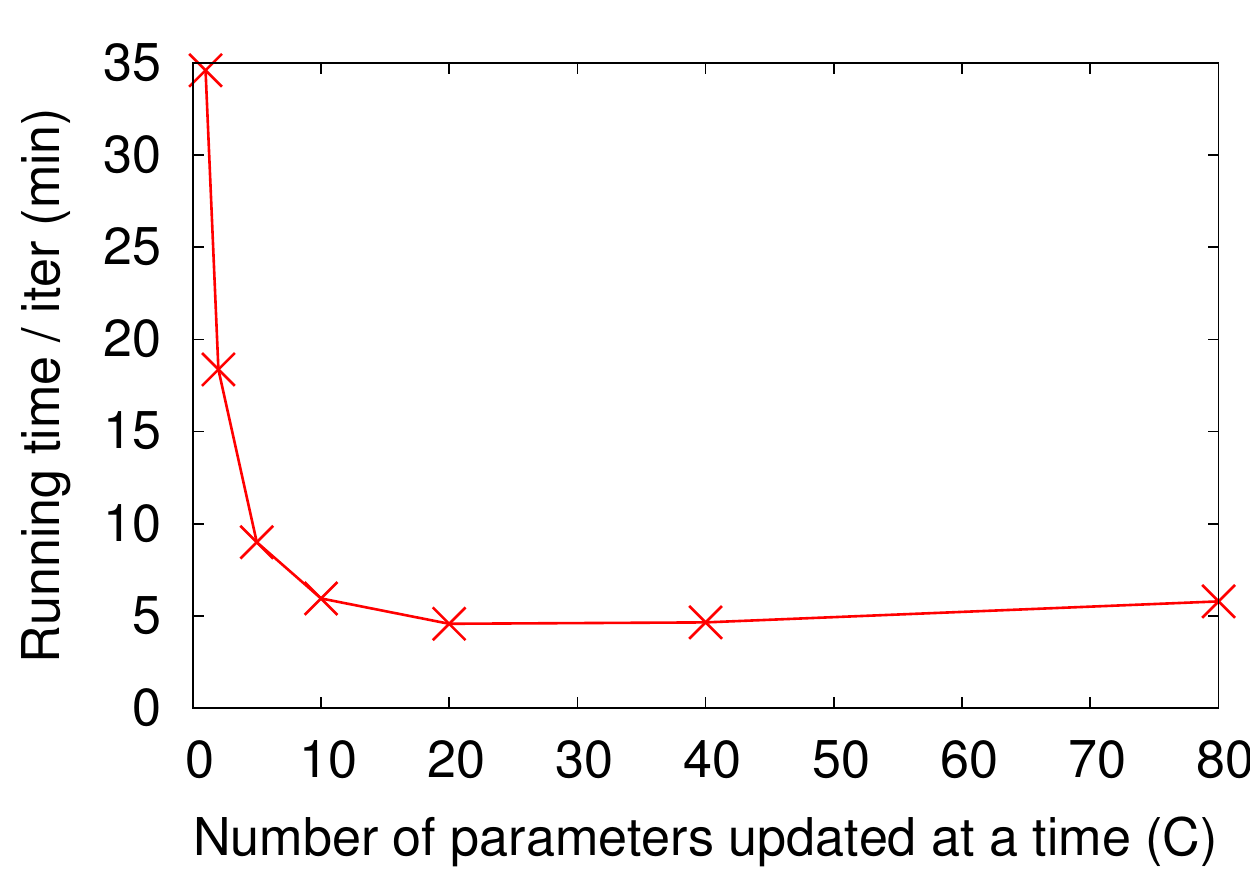}
	}
	\caption{Effects of the number of columns updated at a time ($C$) on the running time of \CA.
		Running time per iteration decreases until $C = 20$, then starts to increase.
	}
	\label{fig:C_TIME}
\end{figure} 

\section{Conclusion}
\label{sec:conclusion}
In this paper, we propose \CA and \CD, distributed algorithms for high-dimensional large-scale tensor factorization.
They are scalable with all aspects of data (i.e., dimension, the number of observable entries, mode length, and rank) and show a trade-off:
\CA has an advantage in terms of convergence speed, and
\CD has one in terms of memory usage.
The local disk caching and the greedy row assignment, two proposed optimization schemes, significantly accelerate not only \CA and \CD but also their competitors.

\section*{Acknowledgments}
This work was supported by AFOSR/AOARD under the Grant No. FA2386-14-1-4036,
and by the National Research Foundation of Korea (NRF) Grant funded by the Korean Government (MSIP) (No. 2013R1A1A1064409)

\bibliography{BIB/myref}
\bibliographystyle{ieeetr}

\clearpage

\end{document}